\newtheorem{theorem}{Theorem}[section]
\newtheorem{lemma}[theorem]{Lemma}
\newtheorem{prop}[theorem]{Proposition}
\newtheorem*{theorem*}{Theorem}{\bf}{\it}
\newtheorem*{proposition*}{Proposition}{\bf}{\it}
\newtheorem*{observation*}{Observation}{\bf}{\it}
\newtheorem*{lemmaA*}{Lemma A}{\bf}{\it}
\newtheorem*{lemmaB*}{Lemma B}{\bf}{\it}
\theoremstyle{definition}
\theoremstyle{remark}
\newtheorem{remark}[theorem]{Remark}
\newcommand{\h}{\mathcal H}
\newcommand{\R}{\mathbb R}
\newcommand{\K}{\mathcal{K}}
\newcommand{\dv}{{\rm{div}}}
\newcommand{\dist}{{\rm{dist}}}
\newcommand{\diam}{{\rm{diam}}}
\def\XXint#1#2#3{{\setbox0=\hbox{$#1{#2#3}{\int}$ }
\vcenter{\hbox{$#2#3$ }}\kern-.6\wd0}}
\begin{document}
\title[Propagation of smallness]{Quantitative propagation of smallness for solutions of  elliptic equations.}
\author[A. Logunov]{Alexander Logunov}
\address{A.L.: School of Mathematical Sciences, Tel Aviv University, Tel Aviv, 69978, Israel;}
\address{Chebyshev Laboratory, St. Petersburg State University, 14th Line V.O., 29B, Saint Petersburg, 199178, Russia}
\address{Institute for Advanced Study, Princeton, NJ, 08540, USA}
\email{log239@yandex.ru}

\author[E. Malinnikova]{Eugenia Malinnikova}
\address{E.M.: Department of Mathematics, Purdue University, 
150 N. University Street, West Lafayette, IN 47907-2067}
\address{Department of Mathematical Sciences,
Norwegian University of Science and Technology
7491, Trondheim, Norway}
\email{eugenia.malinnikova@ntnu.no}
\begin{abstract}  Let $u$ be a solution to an elliptic equation $\textup{div}(A\nabla u)=0$ with Lipschitz coefficients in $\mathbb{R}^n$.
 Assume $|u|$ is bounded by $1$ in the ball $B=\{|x|\leq 1\}$. We show that if $|u| < \varepsilon$ on a set  $ E \subset \frac{1}{2} B$
 with positive $n$-dimensional Hausdorf measure, then
$$|u|\leq C\varepsilon^\gamma \textup{ on } \frac{1}{2}B,$$
 where $C>0, \gamma \in (0,1)$ do not depend on $u$ and depend only on $A$ and the measure of $E$.
 We specify the dependence on the measure of $E$ in the form of the Remez type inequality. Similar estimate holds for sets $E$ with Hausdorff dimension bigger than $n-1$.

 For the gradients of the solutions we show that a similar propagation of smallness holds for  sets of
 Hausdorff dimension bigger than $n-1-c$, where $c>0$ is a small numerical constant depending on the dimension only.

\end{abstract}
\thanks{The author was supported in part by ERC Advanced Grant 692616, ISF
Grants 1380/13, 382/15 and by a Schmidt Fellowship at the Institute for Advanced
Study. \\
E. M. was supported by Project 213638 of the Research Council of Norway.}
\maketitle
\section{Introduction}

  This paper contains several quantitative results on  propagation of smallness for solutions of elliptic PDE.
The results concern the logarithms of the magnitudes of the solutions and their gradients. The techniques used in this paper were recently applied to estimates of zero sets of Laplace eigenfunctions \cite{LM1},\cite{L2},\cite{L3}.

 The inspiration comes from  the following useful fact from complex analysis:
{\it if $f$ is a holomorphic function on $\mathbb{C}$, then $\log|f|$ is subharmonic.}
 For this simple and powerful fact there are no known direct analogs for real valued solutions of elliptic PDE on $\mathbb{R}^n$, except for
  the gradients of harmonic functions  on $\mathbb{R}^2$, which can be identified with the holomorphic functions.
 For a harmonic function $u$ in $\mathbb{R}^n$, $n \geq 3$, it is no longer true that $\log|\nabla u|$ is necessarily  subharmonic.
 However some logarithmic convexity properties for harmonic functions  still hold. One example is the classical three spheres theorem, which claims that  for solutions $u$ to a reasonable uniformly elliptic equation $Lu=0$ in $\mathbb{R}^n$ (one can think that $L=\Delta$)
the following inequality holds

\begin{equation}\sup_{B}|u|\le C(\sup_{\frac{1}{2}B}|u|)^\gamma(\sup_{2B}|u|)^{1-\gamma},\end{equation}
 where $B=\{x \in \mathbb{R}^n: |x|\leq 1 \}$, constants $C>0,\gamma \in (0,1)$ depend only on the elliptic operator $L$ and do not depend on $u$.

 The three spheres theorem holds for linear uniformly elliptic PDE of higher order under some smoothness assumptions on the coefficients (\cite{S})
 as well as for some  non-linear elliptic equations (\cite{D}).


\subsection{Three spheres theorem for wild sets.}

 Throughout this paper $\Omega$ will be a bounded domain in $\R^n$ and $u$ will denote a solution of an elliptic equation in the divergence form $\dv(A\nabla u)=0$ in $\Omega$ with Lipschitz coefficients.  We will show that in the three spheres theorem one can replace $\sup_{\frac{1}{2}B}|u|$ by the supremum
 over any set $E$ with positive volume.

Let $E$ and $\K$ be subsets of $\Omega$ such that the distances from $E$ and $\K$ to $\partial\Omega$ are positive. We assume that
$E$ has positive n-dimensional Lebesgue measure. 
 We aim to prove the following estimate
\begin{equation}\label{eq:main}\sup_{\K}|u|\le C(\sup_E|u|)^\gamma(\sup_\Omega|u|)^{1-\gamma},\end{equation}
 where $C>0$ and $\gamma \in(0,1)$ are independent of $u$, but depend on $\Omega$, $A$, the measure of $E$, and the distances from $\K$ and $E$  to the boundary of $\Omega$.  

 If $\sup_\Omega|u|=1$ and $\sup_E|u|=\varepsilon$, then \eqref{eq:main} can be written as 
\begin{equation}\label{eq:eps}
\sup_{\K}|u|\le C\varepsilon^\gamma.
\end{equation} 
 This inequality explains why the result is called propagation of smallness. Typically, we start with some set, where we know that the solution is small, and then we make a conclusion that it is also small on a bigger set.

The fact that the set $E$ is allowed to be arbitrary wild, while the constants depend only on its measure, seems to be useful for applications, see \cite{AEWZ}. Further we will specify the dependence of  constants on the  measure in the form of the Remez type inequality.

\subsection{Preceding results.}
 The result that we prove is expected. We would like to mention the preceding work in this direction. 
In the case of analytic coefficients the estimate \eqref{eq:main} was proved by Nadirashvili \cite{N1}, see also \cite{V1}. The case of $C^\infty$-smooth coefficients remained open till now, but there were several attempts to prove it. Estimates, weaker than \eqref{eq:eps}, were obtained by Nadirashvili \cite{N2} and Vessella \cite{V2}. See also \cite{MV}, where the case of solutions of elliptic equations with singular lower order coefficients is treated.
 In the preceding results the exponent $\varepsilon^\gamma$ in the right-hand side of \eqref{eq:eps} was replaced by $\exp(-c|\log\varepsilon|^p)$ for some $p=p(n)<1$. We push $p$ to $1$ in this paper.  

\subsection{Remez type inequality.}
In this note we prove \eqref{eq:main} in the  setting of smooth coefficients, using the new results on the behavior of the doubling index of solutions to elliptic equations presented in \cite{LM1, L2, L3}.
 On the way of proving \eqref{eq:main} we obtain an interesting inequality for solutions of elliptic equations, which reminds the classical Remez inequality for polynomials, the role of the degree is now played by the doubling index.

 Let $Q$ be a unit cube. Assume $u$ is a solution to  $\dv(A\nabla u)=0$ and the doubling index $N= \log  \frac{\sup_{2Q} |u|}{\sup_Q |u|}$.
 Then 
\begin{equation} \label{Remez1}
\sup_{Q}|u|\le C \sup_E|u|\left(C\frac{|Q|}{|E|}\right)^{CN}
\end{equation}
where $C$ depends on $A$ only, $E$ is any subset of $Q$ of a positive measure.

Note that if $u$ is a harmonic polynomial in $\mathbb{R}^n$,
 then one can replace $N$ by the degree of $u$. The doubling index for harmonic polynomials
 can be estimated from above by the degree of the polynomial.

Garofalo and Lin \cite{GL} proved almost monotonicity of doubling index for solutions of second order elliptic PDEs and applied this result
to prove unique continuation properties. In particular, they showed that  both $|u|^2$  and $|\nabla u|^2$ are Muckenhoupt weights with parameters that depend on the maximal doubling index. This implies \eqref{Remez1} with some implicit power $const(N)$ in place of $CN$ 
 and with $L^2$ norm in place of $\sup$  norm. 


\subsection{Propagation of smallness from sets with big Hausdorff dimension.}

The assumption that $E$ has positive $n$-dimensional Lebesgue measure can be relaxed. It is enough to assume that the dimension of $E$ is larger than $n-1$, see \cite{M} for the details in the analytic case.
We  fix the  Hausdorff content of $E$ of some order  $n-1+\delta$ with $\delta>0$ and obtain inequality \eqref{eq:main}. 
The main Lemma \ref{pr:2}  gives an upper estimate  for the Hausdorff content of the set where the solution is small.

\subsection{Propagation of smallness for gradients.}
 In  Section \ref{sec:grad} we prove a result for the gradients of solutions of elliptic PDEs, which is new even for ordinary harmonic functions in $\mathbb{R}^n,$ $n\geq 3$. Propagation of smallness for the gradients of solutions is better than for the solutions themselves. More precisely, 
 the inequality remains the same
\begin{equation} \label{eq:grad1}
\sup_{\K}|\nabla u|\le C(\sup_E|\nabla u|)^\gamma(\sup_{\Omega}|\nabla u|)^{1-\gamma},
\end{equation}
but now  the set $E$ is allowed to be smaller.
Namely, we show that there is a constant $c=c(n)\in (0,1)$ such that \eqref{eq:grad1} is valid for 
sets $E$ with Hausdorff dimension $$\dim_\h(E) > n-1-c.$$ We give the precise statement in Section \ref{sec:grad}.

\textbf{Precaution.} We warn the reader that the paper is not self-contained: sometimes we use recent results, which are proved in other papers.
 Namely,  we use the technique of counting doubling indices developed in \cite{LM1,L2,L3} and in Section \ref{sec:grad} we rely on estimates for sublevel sets of the gradients of solutions obtained in \cite{CNV}.

\subsection{Open questions.}
We  propagate smallness (for gradients) from  sets of Hausdorff dimension  bigger than $n-1-c$.   It would be interesting to obtain quantitative estimates for propagation  of smallness from sets of Hausdorff dimension greater than $n-2$. 
There are qualitative stratification  results for critical sets \cite{CNV} that suggest that $n-2$ is the correct threshold, but at the moment  there are no  quantitative  estimates  for the interval $(n-1-c, n-2)$.

\textbf{Question 1.} Is it true that the inequality \eqref{eq:grad1} holds for sets $E$ with $\dim_\h(E) > n-2$ and the constants can be chosen to depend only on the operator $A$, domain $\Omega$, the distances from $E$ and $\K$ to the boundary of $\Omega$ and the Hausdorff content of $E$ of order $n-2+\delta$ for any $\delta>0$?

Such estimates would be related to a conjecture by Fang-Hua Lin \cite{Lin} on the size of the critical sets of  solutions. For the sake of simplicity we formulate Lin's conjecture
 for ordinary harmonic functions, we also slightly modify the definition of the frequency. 

\textbf{Conjecture 1} (Fang-Hua Lin). Let $u$ be a non-zero harmonic function in the unit ball $B_1\subset \mathbb{R}^n$, $n\geq 3$.   
 Consider $$N= \log \frac{\sup_{B_{1}}|\nabla u|}{\sup_{B_{1/2}}|\nabla u|}$$
   Is it true that the Hausdorff measure  $$\h^{n-2}( \{ \nabla u=0\}\cap B_{1/2} )\leq C_n N^2 $$
 for some $C_n$ depending only on the dimension?

An interesting topic in the propagation of smallness which we don't touch in this paper is the dependence of the constants in \eqref{eq:main} and \eqref{eq:grad1} on the distance from the set $E$ to the boundary of $\Omega$. 

\textbf{Question 2.} Consider the inequality \eqref{eq:grad1} with a set $E$ of $\dim_\h(E) =n-1$ and fixed $\K$, $A$ and $\Omega$. How do the constants $C$ and $\gamma$ depend on the  distance from $E$  to the boundary of $\Omega$?

This question is connected to the quantitative version of the Cauchy uniqueness problem, see \cite{Lin} for related results when $E$ is a relatively open subset of the boundary. The situation changes when we consider wild sets on the boundary of positive surface measure. The  following  question is quite famous, it dates back to at least L. Bers. The two-dimensional case is not difficult due to connections with complex analysis. 
  The fact that the question is open in higher dimensions shows that  we still  don't understand well the Cauchy uniqueness problem even for ordinary harmonic functions
in the dimension three or higher (which is quite embarrassing for the well-developed theory of elliptic PDEs nowadays).

\textbf{Conjecture 2.} Assume that $u$ is a harmonic function in the unit ball $B_1\subset \mathbb{R}^3$ and $u$ is $C^\infty$-smooth in the closed ball $\overline{B_1}$.
 Let  $S \subset \partial B_1 $ be any closed set with positive area.
Is it true that $\nabla u = 0$ on $S$ implies $\nabla u \equiv 0$?

 Usually this question is asked in the form of the Cauchy uniqueness problem, where the condition $\nabla u = 0$ is replaced by
the condition that the Cauchy data $(u,\frac{\partial u}{\partial n})$ are zero on $S$. 
If one takes any Lebesgue point of $S$, then harmonicity of $u$ and $C^\infty$-smoothness automatically implies that all the derivatives of $u$ of any order
are zero at this point. Since the area of Lebesgue points of $S$ is the same as of $S$, one can also assume (in the question above) that all the derivatives
of $u$ vanish at the boundary subset of positive area and the question is whether the harmonic function $u$ should be identically zero.

 For the class  $C^{1+\varepsilon}( \overline{B_1} )$ there is a striking counterexample \cite{BW}, which however is not $C^\infty$-smooth up to the boundary. The attempts to construct $C^2$-smooth counterexamples were not successful.


\subsection{Estimates for  Laplace eigenfunctions}
 Let $(M,g)$ be a $C^\infty$ smooth closed Riemannian manifold  and let $\Delta$ denote the Laplace operator on $M$.
Consider the sequence of Laplace eigenfunctions $\varphi_\lambda$ on $M$ with $\Delta \varphi_\lambda + \lambda \varphi_\lambda = 0. $

We would like to make a remark that the Remez type inequality \eqref{Remez1} for harmonic functions  implies
the following bound for Laplace eigenfunctions, which was conjectured in \cite{DFR}.  For any subset $E$ of $M$ with positive volume the following holds:
\begin{equation} \label{Remez2}
 \sup_E|\varphi_\lambda| \geq \frac{1}{C}\sup_{M}|\varphi_\lambda |\left(\frac{|E|}{C|M|}\right)^{C\sqrt \lambda},
\end{equation}
where $C=C(M,g)>1$ does not depend on $E$ and $\lambda$. 
 Note that $\sqrt \lambda$ corresponds to the degree of the polynomial in Remez inequality.

Looking at the following example of spherical harmonics $u(x,y,z)= \Re (x+iy)^n$ one can see that $L^2$ norm of restriction of $u$ on the unit sphere  is concentrated near equator very fast and $|u|$ is exponentially small on most of the unit sphere.
 This example shows that a sequence Laplace eigenfunctions can be $e^{-c\sqrt \lambda}$ small on  a fixed open subset of the manifold.
 
 The proof of implication \eqref{Remez1} $\implies $ \eqref{Remez2} is a standard trick, we give a sketch of the proof of the implication, which is not difficult.

 \textit{ The function $u(x,t)=\varphi_\lambda(x)e^{\sqrt \lambda t}$ is a harmonic function on the product manifold $M\times \mathbb{R}$. 
  The  doubling index $N$  of $\varphi$ in any geodesic ball is smaller than $C_1\sqrt \lambda$ (\cite{DF}). Then the doubling index for $u$ in any geodesic ball of radius smaller than the diameter of $M$ is also smaller than $C_2\sqrt \lambda$. One can apply \eqref{Remez1} to $u$ with N=$C_2\sqrt \lambda$ and get  the bound \eqref{Remez2} for $\varphi$.}

 It seems that for negatively curved Riemannian manifolds one can prove better versions of \eqref{Remez2}. We don't feel the curvature in our methods.

We would like to mention an outstanding recent result from the works by Bourgain \& Dyatlov \cite{BD} and Dyatlov \& Jin \cite{DJ}.
\begin{theorem}[\cite{BD},\cite{DJ}]
 Under assumption that $(M,g)$ is a closed Riemannian surface with constant negative curvature the following inequality holds for Laplace eigenfunctions on $M$.
 Given an open subset $E$ of $M$ there exists $c=c(E,M,g) >0$  such that 
$$ \int_E \varphi_\lambda^2 \geq c  \int_M \varphi_\lambda^2.$$
\end{theorem}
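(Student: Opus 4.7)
The plan is a contradiction argument via semiclassical defect measures, whose decisive ingredient is the Bourgain--Dyatlov fractal uncertainty principle. Suppose the conclusion fails. Then there exist $\lambda_k\to\infty$ and eigenfunctions $\varphi_k=\varphi_{\lambda_k}$ with $\|\varphi_k\|_{L^2(M)}=1$ but $\|\varphi_k\|_{L^2(E)}\to 0$. Setting $h_k=\lambda_k^{-1/2}$, the $\varphi_k$ are eigenfunctions of $P(h_k)=-h_k^2\Delta-1$. First I would extract, along a subsequence, a semiclassical defect measure $\mu$ on the unit cotangent bundle $S^*M$; by standard microlocal theory $\mu$ is a probability measure, invariant under the geodesic flow $\Phi_t$, whose pushforward to $M$ charges no point of $E$. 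Invariance then forces $\mu$ to be carried by the closed set $K\subset S^*M$ of complete geodesics that never enter $\pi^{-1}(E)$.

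Next I would exploit the fact that on a constant negatively curved surface the geodesic flow is Anosov and $K$ is hyperbolic. Through a Markov section, the stable/unstable slices of $K$ can be identified with $\delta$-Ahlfors--David regular subsets $X,Y\subset[-1,1]$ of dimension $\delta<1$, where the gap $1-\delta$ is the geometric expression of negative curvature. At this point I would invoke the fractal uncertainty principle of Bourgain--Dyatlov: for such $X,Y$ there exist $\beta>0$ and $C>0$ with
\begin{equation*}
\bigl\|\mathbf{1}_{X(h)}\,\mathcal{F}_h\,\mathbf{1}_{Y(h)}\bigr\|_{L^2\to L^2}\le C h^{\beta}\qquad\text{for all } h\in(0,1],
\end{equation*}
where $X(h),Y(h)$ denote $h$-neighborhoods and $\mathcal{F}_h$ is the semiclassical Fourier transform. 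This is a genuine uncertainty statement: no $L^2$ function can be simultaneously microlocalized to fractal sets in position and frequency.

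The translation from the fractal uncertainty principle to an eigenfunction bound is the contribution of Dyatlov--Jin. The idea is to propagate $\varphi_k$ along $\Phi_t$ for a logarithmic time $T\sim|\log h_k|$ by Egorov's theorem, and then to microlocally cut off to $h_k$-neighborhoods of the stable and unstable slices of $K$. In suitable Fourier-integral coordinates adapted to the Anosov splitting the composition of these cutoffs factors through an operator of the form $\mathbf{1}_{X(h_k)}\mathcal{F}_{h_k}\mathbf{1}_{Y(h_k)}$; a telescoping across a partition of unity on $S^*M$ turns the pointwise norm bound above into $\|\varphi_k\|_{L^2(M)}\le C h_k^{\beta}\to 0$, contradicting the normalization $\|\varphi_k\|_{L^2(M)}=1$.

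The main obstacle is the fractal uncertainty principle itself, which lies outside the standard microlocal toolkit; its proof in \cite{BD} proceeds through an intricate argument mixing Beurling--Malliavin type interpolation in spaces of entire functions of exponential type with the self-similarity of Ahlfors--David sets. A subsidiary but still serious obstacle is verifying that the slices of $K$ are genuinely Ahlfors--David regular (not merely of small dimension), which in constant curvature follows from self-similarity under the cocompact Fuchsian group action; this last step is exactly what prevents the method from extending to variable negative curvature.
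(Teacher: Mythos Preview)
The paper does not prove this theorem at all; it is quoted without proof as an ``outstanding recent result'' from \cite{BD} and \cite{DJ}, included only to contrast the negatively curved case with the Remez-type bound \eqref{Remez2}. So there is no proof in the paper to compare your proposal against.

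That said, your outline is a recognisable sketch of the Dyatlov--Jin argument, with the fractal uncertainty principle of Bourgain--Dyatlov as the key input. A few points deserve tightening if you want the sketch to stand on its own. First, the passage from ``$\mu$ is supported on $K$'' to the desired contradiction is not purely a defect-measure argument: one needs a \emph{quantitative} statement that microlocal mass near $K$ is $O(h^\beta)$, not merely that $\mu(K)=0$; this is why the long-time Egorov propagation and the careful second-microlocal cutoffs in \cite{DJ} are essential, and your telescoping remark glosses over real work. Second, in the closed-surface setting the relevant porous sets are not limit sets of a convex-cocompact group but transversal slices of the (full-measure) unstable foliation intersected with the complement of the propagated $E$; the Ahlfors--David regularity input in \cite{DJ} is replaced by a porosity condition, and it is the $C^{1+}$ regularity of the stable/unstable foliations---automatic in constant curvature---rather than group self-similarity that makes the reduction to the one-dimensional FUP go through. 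Your diagnosis of the obstruction to variable curvature is therefore slightly off target.
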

 The constant $c$ does not depend on the eigenvalue $\lambda$.
Note that the situation on closed surfaces of constant negative curvature is different from the case of the sphere.

 A beautiful result \cite{BR} by Bourgain and Rudnick states  that on a two dimensional torus $T^2 = \mathbb{R}^2 / \mathbb{Z}^2$ 
equipped with the standard metric the toral Laplace eigenfunctions $\varphi_\lambda$ satisfy $L^2$ lower and upper restriction bounds on curves.
Namely, if $S$ is a smooth curve on $T^2$ with non-zero curvature and $\lambda > const(S)$, then $$ c \| \varphi_\lambda \|_{L^2(S)} \leq \|\varphi_\lambda \|_{L^2(T^2)}  \leq C \|\varphi_\lambda \|_{L^2(S)}.$$
In particular that implies that on a given  smooth curve, which is not geodesic, only a finite number of Laplace eigenfunctions can vanish. 
 
 A very interesting question that we don't touch here   is how $L^2$ mass of Laplace eigenfunctions $\varphi_\lambda$  are asymptotically distributed on the manifold as $\lambda \to \infty$. In particular,  for negatively curved surfaces the quantum unique ergodicity conjecture states that asymptotically the $L^2$ mass of eigenfunctions is distrubed uniformly.  We refer to \cite{QUE1},\cite{Li},\cite{QUE2},\cite{QUE3},\cite{QUE4},\cite{QUE5},\cite{DJ} for the results on ergodic properties of eigenfunctions.

\section{Preliminaries} \label{sec:mr}
\subsection{Hausdorff content}
Remind that the Hausdorff content of a set $E\subset\R^n$ is
\[C_\h^d(E)=\inf
\big\{
\sum_j r_j^d: E\subset\cup_j B(x_j, r_j)\big\},\]
and the Hausdorff dimension of $E$ is 
defined as
$$\dim_\h(E)=\inf\{d: C_\h^d(E)=0\}.$$
Clearly the Hausdorff content is sub-additive 
$$C_\h^d(E_1\cup E_2)\le C_\h^d(E_1)+C_\h^d(E_2).$$
It also satisfies the natural scaling identity, if $\phi_t$ is a homothety of $\R^n$ with coefficient $t$ then $$C_\h^d(\phi_t(E))=t^dC^d_\h(E).$$  The advantage of the Hausdorff content  over the corresponding Hausdorff measure is that the former is always finite on bounded sets, it is bounded from above by $\diam(E)^d$. The Hausdorff content of order $n$ is equivalent to the Lebesgue measure.

\subsection{Three spheres theorem for wild sets.}
We always assume that $u$ is a solution of an elliptic equation in divergence form in a bounded domain $\Omega \subset \mathbb{R}^n$,
\begin{equation}\label{eq:ell}\dv(A\nabla u)=0,\end{equation}
where $A(x)=[a_{ij}(x)]_{1\le i,j\le n}$ is a symmetric uniformly elliptic matrix with Lipschitz entries,
\begin{equation}\label{eq:Lambda}
\Lambda_1^{-1}\|\zeta\|^2\le\langle A\zeta, \zeta\rangle\le \Lambda_1\|\zeta\|^2,\quad |a_{ij}(x)-a_{ij}(y)|\le\Lambda_2|x-y|.
\end{equation}

Let $m,\delta, \rho$ be positive numbers. Suppose a set $E\subset \Omega$ satisfies $$C_\h^{n-1+\delta}(E)> m, \quad \dist(E,\partial \Omega) > \rho.$$ 
Let $\K$  be a  subset of $\Omega$ with $\dist(\K,\partial \Omega) > \rho$. Our main result is the following.

\begin{theorem}\label{th:m} There exist
 $C, \gamma>0$, depending on $m,\delta, \rho, A,\Omega$ only  such that 
$$\sup_{\K}|u|\le C(\sup_E|u|)^\gamma(\sup_\Omega|u|)^{1-\gamma}$$
for any solution $u$ of $\dv(A\nabla u)=0$ in $\Omega$.
\end{theorem}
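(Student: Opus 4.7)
Normalize to $\sup_\Omega|u|=1$ and let $\eps=\sup_E|u|$; the goal is $\sup_\K|u|\le C\eps^\gamma$. The plan is to argue contrapositively: if $u$ is not small on $\K$, then the sublevel set $\{|u|<\eps\}$ is forced to have very small Hausdorff content of order $n-1+\delta$, which contradicts $C_\h^{n-1+\delta}(E)>m$ since $E\subset\{|u|\le\eps\}$. Thus the heart of the proof is Lemma~\ref{pr:2}: an upper bound on $C_\h^{n-1+\delta}(\{|u|<\eps\})$ in terms of $\eps$ and the doubling index of $u$ on a reference ball. Standard chain-of-three-spheres arguments using~(1) and elliptic regularity let us reduce to a single cube $Q^*$ containing a point of $\K$, with $u$ defined on a fixed dilate $\alpha Q^*\subset\Omega$ and $\sup_{\alpha Q^*}|u|=1$.

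\textbf{Key steps.} First, subdivide $Q^*$ into dyadic subcubes of side $2^{-k}$. Call a subcube $Q$ \emph{good} if its doubling index $N_Q$ is smaller than a threshold $N_*$ (chosen in proportion to $|\log\eps|$), and \emph{bad} otherwise. The doubling-index counting estimates of \cite{LM1,L2,L3} bound the number of bad subcubes by $C(2^k)^{n-1}\beta^k$ for some $\beta<2^\delta$ once $N_*$ and $k$ are properly tuned, so their total $(n-1+\delta)$-content is negligible compared to $m$. Second, on each good cube $Q$ apply the Remez inequality~\eqref{Remez1}: if $|E\cap Q|\ge \lambda|Q|$ for a fixed $\lambda>0$, then $\sup_Q|u|\le C(C/\lambda)^{CN_*}\eps$, which is a power of $\eps$. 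Third, to pass from Lebesgue measure to Hausdorff content, iterate this dichotomy scale by scale: the portion of $E$ that fails to fill a definite fraction of its containing good cube must concentrate in strictly smaller subcubes, where the same analysis applies after rescaling (the PDE rescales with a comparable ellipticity constant). Summing over scales shows that a $(n-1+\delta)$-content portion of $E$ of mass at least $m/2$ lies in good cubes with density $\ge\lambda$, and the Remez step on those cubes upgrades the smallness of $u$ on $E$ to smallness on $Q^*$. A final application of~(1) propagates the bound back to $\K$.

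\textbf{Main obstacle.} The hard step is the interplay in the key-steps paragraph: the Remez bound~\eqref{Remez1} is \emph{exponential} in $N_Q$, while the counting lemma of~\cite{LM1,L2,L3} supplies only a \emph{power} saving $\beta^k$ in the number of bad subcubes. Balancing the threshold $N_*$, the scale $k$, and the density threshold $\lambda$ so that simultaneously (a) the bad cubes contribute negligible $(n-1+\delta)$-content, (b) the Remez bound on good cubes still gives a genuine power of $\eps$, and (c) the iteration across scales telescopes rather than explodes, requires the precise numerical control that the doubling-index inequalities of~\cite{LM1,L2,L3} were designed to deliver. Without the sharp linear-in-index exponent in~\eqref{Remez1}, one would only obtain the weaker bound $\exp(-c|\log\eps|^p)$ with $p<1$, as in the earlier results of~\cite{N2,V2}.
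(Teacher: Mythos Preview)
Your overall strategy is right: normalize, argue contrapositively, and reduce to the Hausdorff-content estimate of Lemma~\ref{pr:2}. The localization to a single cube and the final chain-of-balls propagation to $\K$ are also as in the paper.

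The gap is in the ``Key steps'', where you invoke the Remez inequality~\eqref{Remez1} on the good cubes with doubling index $\le N_*\sim|\log\eps|$. Inequality~\eqref{Remez1} with the sharp linear-in-$N$ exponent is not an independent input here: it is \emph{equivalent} to the case $\delta=1$ of Lemma~\ref{pr:2} (see the Remark following that lemma), which is precisely what you identify as the heart of the proof. So as written the argument is circular. What is available a priori is only the qualitative bound of Lemma~\ref{l:base}, valid for $N$ bounded by an absolute constant and with no control on how its constants blow up in $N$; that lemma cannot be applied at an unbounded threshold $N_*\sim|\log\eps|$. Your ``Main obstacle'' paragraph in fact pinpoints exactly this: the sharp exponent in~\eqref{Remez1} is the whole difficulty, not a tool one may assume.

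The paper's proof of Lemma~\ref{pr:2} does not fix a single threshold $N_*$. Instead it partitions $Q$ into $B^n$ subcubes with $B$ \emph{fixed}, and uses Lemma~A to sort them into ``good'' cubes where the doubling index drops from $N$ to $N/2$ (at most $B^n$ of these) and ``bad'' cubes where it merely stays $\le N$ (at most $B^{n-1-c}$ of these). Together with~\eqref{eq:3sph} this gives the recursion
\[
M(N,a)\le B^{1-\delta}M(N/2,\tilde a)+B^{-\delta-c}M(N,\tilde a),\qquad \tilde a=a-C_1N\log B,
\]
for the supremal content $M(N,a)$, which is then closed by a double induction: on $N$ (halving, with Lemma~\ref{l:base} as the base case) and on $a$ (in steps of size $C_1N\log B$). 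The repeated halving $N\mapsto N/2$ supplied by Lemma~A is exactly what \emph{manufactures} the linear exponent $CN$ in~\eqref{Remez1}; one cannot quote that exponent in advance. If you tried to run your single-threshold scheme honestly, you would have to iterate Lemma~A roughly $\log_2 N$ times before the good cubes carry bounded doubling index and Lemma~\ref{l:base} becomes applicable --- and organizing that iteration carefully is precisely the recursion above, not a one-shot application of~\eqref{Remez1}.
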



\subsection{Doubling index }\label{sec:aux}
 We formulate several well-known lemmas connected to the three spheres theorem (or monotonicity property of the frequency function of a solution). We refer to \cite{HL} for an introduction to the frequency function, which is almost a synonym for the doubling index (the
 term "`frequency"' will not be used in this paper). 



 Let $B$ be a ball in $\R^n$. Define the doubling index of a non-zero function $u$ (defined in $2B$) by 
$$ N(u, B)=  \log\frac{\sup_{2B} |u|}{\sup_B |u|}.$$
It is a non-trivial fact  that the doubling index of  solutions to an elliptic second order PDE in divergence form is almost monotonic in the following sense:
 \begin{equation}\label{eq:monot}N(tB) \leq N(B) (1+c) +C\end{equation}
for any positive $t \leq 1/2$. Here as usual $tB$ denotes a ball of radius $t$ times the radius of $B$ with the same center as $B$, the constants $c,C>0$ depend on $A$, but are independent of $u$.  Almost monotonicity of the doubling index implies the three spheres theorem. The  three spheres theorem that implies the almost monotonicity property of the doubling index  was proved in  the work \cite{L} of Landis. Garofalo and Lin (\cite{GL}) proved a sharper version  of the monotonicity property. In particular, the results of \cite{GL}  imply that if the elliptic operator is a small perturabtion of the Laplace operator, then $c$ in \eqref{eq:monot} can be chosen to be small ($C$ is still big, but it is less important).
  We refer the reader to \cite{Man} and \cite{L2} for further discussion.

 For a cube $Q$ in $\R^n$ let $s(Q)$ denote its side length and let $tQ$ be the cube with the same center as $Q$ and such that $s(tQ)=ts(Q)$.
 Suppose that $(20n)Q\subset \Omega$. We define the doubling index of a function $u$ in the cube $Q$  by
\begin{equation}\label{eq:double}
N(u,Q)=\sup_{x\in Q, r\le s(Q)}\log\frac{\sup_{B(x, 10nr)}|u|}{\sup_{B(x,r)}|u|}.
\end{equation} 
 
  This is a kind of maximal version of the doubling index, which is convenient in the sense that if a cube $q$ is a subset of $Q$, then 
 $N(u,q) \leq N(u,Q)$. The definition implies the following estimate. Let $q$ be a subcube of $Q$ and $K =\frac{s(Q)}{s(q)} \geq 2.$
Then 
\begin{equation} \label{eq:3sph}
\sup_q |u| \geq c K^{-C N} \sup_Q |u|,
\end{equation}
 where  $N=N(u,Q)$ and $c$ and $C$ depend on $n$ only. 




\section{Auxiliary lemmas}
\subsection{Estimates  of the zero set} The doubling index is useful for estimates of the zero set of solutions of elliptic equations. We will need the following known result.

 \begin{lemma}[\cite{HS}]\label{lem:up} Let $u$ be a solution to  $\dv(A\nabla u)=0$ in $\Omega\supset 20nQ$.
 For any $N>0$ there exists $C_N$, which is independent of $u$, but depends on $A$,$N$ and $\Omega$, such that if $ N(u,Q)\leq N$, then
\begin{equation} \label{lem:3sph}
\h^{n-1}(\{u=0\}\cap Q) \leq C_N s(Q)^{n-1}.
\end{equation}
\end{lemma}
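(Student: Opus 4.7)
The plan is to follow the classical argument of Hardt and Simon \cite{HS}, combining the qualitative structure of nodal sets with a compactness argument to convert finiteness into a bound depending only on the doubling index. After a standard normalization, the main work is to show that the supremum of $\h^{n-1}(\{u=0\}\cap Q)$ over all normalized solutions with $N(u,Q)\le N$ is finite.

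First I would normalize to $s(Q)=1$ and $\sup_Q|u|=1$ using scaling and homogeneity; both operations preserve $N(u,Q)$ and the ratio $\h^{n-1}(\{u=0\}\cap Q)/s(Q)^{n-1}$. Almost-monotonicity of the doubling index, recalled in \eqref{eq:monot}, then gives a uniform bound $N(u,q)\le c_1 N + c_2$ for every subcube $q\subset Q$, with constants depending only on $A$. I would also recall the qualitative structure theorem for each fixed nontrivial $u$: by strong unique continuation one has $\dim_\h(\{u=0,\nabla u=0\})\le n-2$, while at regular zeros the implicit function theorem shows that the nodal set is a $C^{1,\alpha}$-hypersurface. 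Consequently $\h^{n-1}(\{u=0\}\cap Q)$ is finite for each individual nontrivial solution.

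The uniform bound depending only on $N$ would then come from compactness. Let $M_N$ denote the supremum of $\h^{n-1}(\{u=0\}\cap Q)$ over all normalized $u$ with $N(u,Q)\le N$ and coefficient matrix $A$ satisfying \eqref{eq:Lambda}. By interior elliptic regularity combined with $\sup_Q|u|=1$, this class is precompact in $C^1(Q)$ (one simultaneously extracts $A_k\to A_\infty$ along a subsequence). If $M_N=\infty$, I would take a maximizing sequence $u_k\to u_\infty$ in $C^1(Q)$; the limit $u_\infty$ is nontrivial because $\sup_Q|u_\infty|=1$, solves $\dv(A_\infty\nabla u_\infty)=0$, and satisfies $N(u_\infty,Q)\le N$ since the suprema of $|u_k|$ on any fixed ball converge. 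The qualitative step then gives $\h^{n-1}(\{u_\infty=0\}\cap Q)<\infty$.

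The hard part is the upper semicontinuity of $\h^{n-1}$ along the nodal sets $\{u_k=0\}$: Hausdorff measure is only lower semicontinuous in general. One route is via a Federer-type local density bound for nodal sets of solutions with bounded doubling, uniformly controlling the $(n-1)$-density of $\{u_k=0\}$ in a shrinking tubular neighborhood of $\{u_\infty=0\}$ by a constant depending only on $N$, combined with a Minkowski-content bound for $\{u_\infty=0\}$. An alternative is the integral-geometric route: express $\h^{n-1}(\{u=0\}\cap Q)$ via Cauchy--Crofton as an integral of zero-counts of $u|_\ell$ over affine lines, and bound these counts by a polynomial in $N$ (straightforward when $A\equiv I$ via extension of $u$ to a harmonic function on $\R^{n+1}$ and reduction to harmonic polynomials, and in general via a quasi-conformal change of coordinates that flattens $A$ locally).
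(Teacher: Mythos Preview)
The paper does not prove this lemma; it is quoted as a known result from \cite{HS} and only its statement is used. So there is no ``paper's own proof'' to compare against, and the question is simply whether your sketch is a valid proof.

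Your outline has a genuine gap exactly where you flag it. A pure compactness argument of the form ``take a maximizing sequence $u_k$ with $\h^{n-1}(\{u_k=0\}\cap Q)\to\infty$, pass to a $C^1$ limit $u_\infty$, and invoke $\h^{n-1}(\{u_\infty=0\}\cap Q)<\infty$'' does not yield a contradiction, because $\h^{n-1}$ of the zero set is not upper semicontinuous under $C^1$ convergence of the functions. You acknowledge this, but neither of your two proposed remedies is actually carried out. The ``Federer-type local density bound'' you allude to would require showing, \emph{uniformly in $k$}, that $\h^{n-1}(\{u_k=0\}\cap B_r)\le C_N r^{n-1}$ for small balls --- but that is essentially the statement you are trying to prove, so the argument is circular unless you supply an independent mechanism for the density bound. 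The integral-geometric route via Crofton is a legitimate alternative (this is closer in spirit to Donnelly--Fefferman than to Hardt--Simon), but to make it work for $\dv(A\nabla u)=0$ with merely Lipschitz $A$ you must actually prove a uniform bound on the number of zeros of $u$ along almost every line segment in terms of $N$; the phrase ``quasi-conformal change of coordinates that flattens $A$ locally'' is not enough, since such a change does not reduce the problem to a harmonic function on a line.

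It is also worth noting that what you describe is \emph{not} the argument of \cite{HS}. Hardt and Simon do not run a global compactness/maximizing-sequence argument. Their proof is an iterative multiscale scheme: at each point and each scale they approximate $u$ by its leading homogeneous harmonic polynomial (the blow-up), using the frequency/doubling control to bound the vanishing order and the quality of approximation, and then cover the nodal set by pieces where this polynomial approximation gives a controlled $(n-1)$-dimensional graph plus a lower-dimensional singular part, iterating to sum a geometric series. If you want to ``follow \cite{HS}'', that is the structure you should reproduce; the compactness shortcut you propose does not close.
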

  We will use only finiteness of $C_N$ and will apply it for $N$ smaller than some numerical constant.
 \begin{remark} One can ask what is the optimal upper bound. The harmonic counterpart of the Yau conjecture suggests that there is a  linear estimate:
 $$ \h^{n-1}(\{u=0\}\cap Q) \mathop{\leq}\limits^{?} C_{\Omega, A} N.$$
The conjecture is open, but known in the case of analytic coefficients due to results by Donnelly and Fefferman \cite{DF}. 
In the setting of smooth coefficients an exponential bound ($CN^{CN}$) was proved in \cite{HS}, a recent result in \cite{L2}
provides a polynomial upper bound $C N^{\alpha}$, $\alpha>1$ depends on the dimension. 
 \end{remark}

The measure of the zero set can be also estimated from below. We assume that $u$ and $Q$ are as in Lemma \ref{lem:up}.
 
\begin{lemma} \label{lem:low}
 Let $q$ be a subcube of $Q$ and suppose that $u$ has a zero in $q$.
 Then 
\begin{equation}\label{eq:weak}
\h^{n-1}(\{u=0\}\cap 2q) \mathop{\geq} c_N  s^{n-1}(q),
\end{equation}
where $c_N$ depends on $A,\Omega$ and $N=N(u,Q)$.
\end{lemma}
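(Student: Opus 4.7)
The plan is to argue by compactness and contradiction, with the main input being the fact that the zero set of a non-trivial solution to a uniformly elliptic equation has positive $(n-1)$-dimensional Hausdorff measure in every neighborhood of any of its zeros.

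First I would normalize: by translation assume $x_0 = 0 \in q$ with $u(x_0)=0$, by rescaling assume $s(q)=1$, and after dividing by $\sup_{2q}|u|$ (positive by unique continuation, since the doubling-index bound prevents $u\equiv 0$) assume $\sup_{2q}|u|=1$. The rescaling preserves the ellipticity constants of the rescaled operator $A(\cdot s(q))$, does not enlarge the Lipschitz seminorm of the coefficients, and preserves the bound $N(u,Q)\le N$. Now suppose the conclusion fails: then there is a sequence $u_k$ of normalized solutions of $\dv(A_k\nabla u_k)=0$ with $u_k(x_k)=0$ for some $x_k\in q$, $\sup_{2q}|u_k|=1$, $N(u_k,Q)\le N$, but $\h^{n-1}(\{u_k=0\}\cap 2q)\to 0$. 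Standard Schauder / De Giorgi--Nash estimates give uniform $C^{1,\alpha}_{\mathrm{loc}}$ bounds, so along a subsequence $A_k\to A_\infty$ uniformly, $x_k\to x_\infty\in q$, and $u_k\to u_\infty$ in $C^1_{\mathrm{loc}}$, where $u_\infty$ solves $\dv(A_\infty\nabla u_\infty)=0$ with $u_\infty(x_\infty)=0$, $\sup_{2q}|u_\infty|=1$ and $N(u_\infty,Q)\le N$.

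Next I would analyze the zero set of $u_\infty$ near $x_\infty$. Because $u_\infty$ is non-constant with $u_\infty(x_\infty)=0$, quantitative unique continuation (finite vanishing order controlled by the doubling index) gives a vanishing order $d\le CN$ at $x_\infty$, and rescaling around $x_\infty$ shows that $u_\infty$ is well-approximated by a homogeneous polynomial $P_d$ of degree $d$ that is harmonic with respect to the frozen constant-coefficient operator at $x_\infty$. The zero set of any such non-zero polynomial is an algebraic cone of dimension $n-1$ with positive $\h^{n-1}$ measure in every ball about the origin. Combined with the structure theorem (e.g.\ Han--Lin) stating that the singular stratum $\{u_\infty=0\}\cap\{\nabla u_\infty=0\}$ has Hausdorff dimension at most $n-2$, this produces a regular zero $z_\infty\in\{u_\infty=0\}\cap 2q$ arbitrarily close to $x_\infty$ with $\nabla u_\infty(z_\infty)\ne 0$.

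Finally I would propagate this back to $u_k$. The implicit function theorem furnishes a ball $B(z_\infty,r)\subset 2q$ on which $\{u_\infty=0\}$ is a smooth $(n-1)$-graph with $\h^{n-1}$-measure at least some $\beta>0$. Since $u_k\to u_\infty$ in $C^1$ and $|\nabla u_\infty|\ge c>0$ on this ball, for all large $k$ the zero set $\{u_k=0\}\cap B(z_\infty,r)$ is a $C^1$-graph $C^1$-close to that of $u_\infty$, hence $\h^{n-1}(\{u_k=0\}\cap 2q)\ge\beta/2$, contradicting $\h^{n-1}(\{u_k=0\}\cap 2q)\to 0$.

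The main obstacle is the middle step: producing a regular zero of the limit $u_\infty$ near $x_\infty$, i.e.\ ruling out the possibility that the entire zero set of $u_\infty$ near $x_\infty$ consists only of singular points and has lower Hausdorff dimension. This is where the harmonic-polynomial approximation at a zero (with degree controlled by the doubling index) and the Han--Lin dimensional bound on the singular stratum are essential. A more direct alternative would be to replace the compactness step by a quantitative Hurwitz/stability argument relating $u$ to its leading approximating polynomial in a ball of radius comparable to $s(q)$ times a function of $N$, thereby obtaining an explicit, rather than abstract, dependence $c_N$.
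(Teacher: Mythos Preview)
Your compactness argument is correct and gives the lemma. A few small remarks: after rescaling so that $s(q)=1$ the cube $Q$ becomes large and variable, so you should not carry the statement ``$N(u_k,Q)\le N$'' through the limit; what you actually use (and what suffices) is the local consequence $N(u_k,q)\le N$, which gives uniform $L^\infty$ bounds on a fixed neighbourhood of $2q$ and hence the $C^{1,\alpha}$ compactness you need. Also, the polynomial--approximation step is not really needed to find a regular zero: since $u_\infty(x_\infty)=0$ and $u_\infty\not\equiv 0$, the strong maximum principle forces a sign change in every ball about $x_\infty$, so $\{u_\infty=0\}$ separates that ball and hence has Hausdorff dimension at least $n-1$; combined with the Han--Lin bound $\dim_\h(\{u_\infty=0\}\cap\{\nabla u_\infty=0\})\le n-2$ this already produces regular zeros arbitrarily close to $x_\infty$.

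The paper does not prove this lemma; it cites \cite{LM1} and calls the inequality ``not difficult''. The argument there is direct rather than by contradiction: one uses the doubling--index bound together with Harnack to show that both $\{u>0\}$ and $\{u<0\}$ occupy a definite ($c_N$--)fraction of $2q$, and then an isoperimetric/integral--geometry estimate converts this into the lower bound on $\h^{n-1}(\{u=0\}\cap 2q)$. That route is more elementary and yields an explicit (if poor) dependence on $N$. Your route is softer but imports the Han--Lin structure theorem for the singular set, which is itself a nontrivial input; the payoff is a very short proof once that theorem is granted, at the cost of any explicit control of $c_N$. Either approach is acceptable here since the paper only needs finiteness of $c_N$ for bounded $N$.
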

\begin{remark}
The following  much stronger version of this estimate is proved in \cite{L3},
 $$ \h^{n-1}(\{u=0\}\cap 2q) \mathop{\geq} c  s^{n-1}(q)$$
 where $c$ depends on $A,\Omega$ only. We will use only the weak inequality  (\ref{eq:weak}) above, which is not difficult (see for example \cite{LM1}). 
\end{remark}

  
\subsection{Estimate for  sub-level sets} 
 The following lemma gives an estimate for the size of the set where a solution to an elliptic PDE is small in terms of the doubling index. We note that the lemma below is qualitative, but not quantitative (in a sense that there is no control of constants in terms of $N$). The lemma  will be further refined to a quantitative version (Lemma \ref{pr:2}).

\begin{lemma}\label{l:base} Let $\delta\in (0,1], N >0$. Assume that $u$ satisfies $\dv(A\nabla u)=0$ in $(20n)Q$,  $\sup_{Q}|u|=1$ and $N(u,Q) \leq N$. Let 
$$E_a=\{x\in \frac{1}{2}Q: |u(x)|<e^{-a}\}.$$ 
Then 
\[C_\h^{n-1+\delta}(E_a)\le M e^{-\beta a}s(Q)^{n-1+\delta},\]
for some $\beta=\beta(N, \delta, A,\Omega)$ and $ M =M (N,\delta, A,\Omega)$.
\end{lemma}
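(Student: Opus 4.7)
The strategy is to show that the sublevel set $E_a$ is contained in a thin tubular neighborhood of the nodal set $\{u=0\}$ whose thickness decays exponentially in $a$, and then to bound the $(n-1+\delta)$-content of that neighborhood using the zero set estimates of Lemmas~\ref{lem:up} and \ref{lem:low}.

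The first step is a quantitative localization: I claim there exists $r_a=C_1 s(Q)\exp(-a/(C_2 N))$, with $C_1,C_2$ depending on $N$ and $A$, such that every $x\in E_a$ satisfies $\dist(x,\{u=0\})\le r_a$. To prove it, fix $x\in E_a$ and let $\rho=\dist(x,\{u=0\})$. Then $u$ has constant sign on $B(x,\rho)$, so Harnack's inequality for nonnegative solutions of $\dv(A\nabla u)=0$ gives
$$\sup_{B(x,\rho/4)}|u|\le C_H|u(x)|<C_H e^{-a}.$$
In the opposite direction, inscribing a cube $q\subset B(x,\rho/4)$ of side comparable to $\rho$ and applying \eqref{eq:3sph} to $q\subset Q$ yields
$$\sup_{B(x,\rho/4)}|u|\ge c\,(\rho/s(Q))^{CN},$$
where $c,C$ depend only on $A$ and the dimension. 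Combining the two inequalities and solving for $\rho$ produces the claimed $\rho\le r_a$.

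Next I would cover the $r_a$-neighborhood of $\{u=0\}$ efficiently. By Lemma~\ref{lem:up} applied to a cube $Q'$ with $\frac12 Q\subset Q'\subset Q$ (say $Q'=\frac34 Q$ once $r_a\ll s(Q)$), one has $\h^{n-1}(\{u=0\}\cap Q')\le C_N s(Q)^{n-1}$. Consider the grid of $r_a$-cubes inside $Q'$ and call a cube \emph{bad} if it meets $\{u=0\}$. By Lemma~\ref{lem:low} the double of each bad cube carries at least $c_N r_a^{n-1}$ of $\h^{n-1}(\{u=0\})$, and these doubles have bounded overlap, so the number $K$ of bad cubes satisfies $K\le C_N' s(Q)^{n-1}/r_a^{n-1}$. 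By the localization step, $E_a$ is covered by balls of radius $(1+\sqrt n)r_a$ centered at the bad cubes. Combining,
$$C_\h^{n-1+\delta}(E_a)\le K\cdot((1+\sqrt n)r_a)^{n-1+\delta}\le C_N'' s(Q)^{n-1}r_a^{\delta}=C_N''C_1^{\delta}s(Q)^{n-1+\delta}e^{-\delta a/(C_2 N)},$$
which is the desired bound with $\beta=\delta/(C_2 N)$ and $M=M(N,\delta,A)$. Small values of $a$ (where the localization argument implicitly required $r_a\le s(Q)/100$) are absorbed into $M$ via the trivial estimate $C_\h^{n-1+\delta}(E_a)\le s(Q)^{n-1+\delta}$ and boundedness of $e^{-\beta a}$ from below.

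The main delicacy is the localization step: one has to arrange that Harnack's inequality and the sub-cube lower bound \eqref{eq:3sph} can be applied in the same ball, which forces the scale $\rho$ to be small compared to $s(Q)$ and produces the split between $a$ large (where the quantitative bound holds) and $a$ bounded (handled by the trivial volume estimate). A secondary technical point is converting the $\h^{n-1}$ bound from Lemma~\ref{lem:up} into a covering of the $r_a$-tube by few balls of radius $\sim r_a$; the ``fat'' lower regularity of the zero set provided by Lemma~\ref{lem:low} does exactly this.
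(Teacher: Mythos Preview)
Your proof is correct and follows essentially the same approach as the paper's: both combine Harnack's inequality with the doubling-index lower bound \eqref{eq:3sph} to force points of $E_a$ to lie within distance $\sim s(Q)e^{-ca/N}$ of the zero set, and then count the relevant cubes at that scale via Lemmas~\ref{lem:up} and~\ref{lem:low}. The only difference is organizational---the paper fixes the grid scale $K=[e^{\kappa a/N}]$ first and shows each $E_a$-touching cube has a sign change in its double, whereas you first prove the pointwise distance bound $\dist(x,\{u=0\})\le r_a$ and then grid; the ingredients and the final exponent $\beta=\delta\kappa/N$ are identical.
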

\begin{proof}
 By $c, C, \kappa, c_1,C_1 \dots$ we will denote positive constants that depend on $\delta, A,$ and $\Omega$ only, while constants $c_N$, $C_N$  additionally depend on $N$.
 
Clearly, it is enough to prove the statement for $ a >> N$ and $a>>1$. For small $a$ the inequality holds if we  choose $M$ large enough to satisfy the inequality.
Without loss of generality we may  assume that $N \geq 2$.

 Let $ K = [e^{\kappa a /N}] $ where $\kappa>0$ is a sufficiently small constant to be specified later.   
Partition $\frac{1}{2}Q$ into $K^n$ equal 
subcubes $q_i$. We will assume that $K>4$, then $4q_i \subset Q$.
 We will estimate the number of cubes $q_i$ that intersect $E_a$.

Let $q_i$ be a cube with $q_i \cap E_a \neq \emptyset$. So $\inf_{q_i} |u| \leq e^{-a} $.

  Assume first that $u$ does not change sign in $2q_i$.
Then by the Harnack inequality
 $$\sup_{q_i}|u| \leq c_1 \inf_{q_i}|u| \leq c_1 e^{-a}.$$
On the other hand by  \eqref{eq:3sph} we have
$$ \sup_{q_i}|u| \geq c_2 K^{- C_1 N}  \geq \frac{c_2}{2} e^{-C_1 \kappa a} .$$
Now, we specify  $\kappa = \frac{1}{2C_1}$. Then the two inequalities above cannot coexist for large $a$. 
 
Hence if $q_i$  intersects $E_a$, then $u$  changes sign in $2q_i$. 
   Denote by $S$ the set of cubes $q_i$ such that $u$  changes sign in $2q_i$.
  Note that 
\begin{equation}\label{eq:L}
 C_\h^{n-1+\delta}(E_a) \leq C_2 |S|s(Q)^{n-1+\delta}K^{-n+1-\delta}. 
\end{equation}
   Now, we will estimate $|S|$ using the bounds for the size of the zero set of $u$. Note that $u$ has a zero in each $2q_i$ for $q_i\in S$. Recall that $4q_i \subset Q$ and each point in $Q$ may be covered only by a finite number of $4q_i$, depending only on the dimension.
 By Lemma \ref{lem:low}
 $$ \h^{n-1}( \{u=0\} \cap Q) \geq c_3 \sum_{S} \h^{n-1}( \{u=0\} \cap 4q_i) \geq c_4 c_N |S| s(Q)^{n-1}K^{-n+1}.$$
 On the other hand, by Lemma \ref{lem:up}
$$ \h^{n-1}( \{u=0\} \cap Q)  \leq C_Ns(Q)^{n-1}.$$
 We therefore have 
$$|S| \leq \frac{C_N}{c_4 c_N} K^{n-1}.$$
 Thus by \eqref{eq:L}
 $$ C_\h^{n-1+\delta}(E_a) \leq C_3 \frac{C_N}{ c_N} K^{-\delta}s(Q)^{n-1+\delta} \leq C_4 \frac{C_N}{ c_N} e^{- \kappa \delta a/N}s(Q)^{n-1+\delta},$$
which is the required estimate with $\beta=\kappa\delta/N$ and $M=C_4C_Nc_N^{-1}$.
\end{proof}

\begin{remark}
 In \cite{L3, L2} it was shown that one can choose $c_N$ independent of $N$ and $C_N= C N^{\alpha}$, where $\alpha$ depends only on the dimension. Hence for $N \geq 1$,
  \begin{equation*}
C_\h^{n-1+\delta}(\{|u|< e^{-a}\}\cap \frac{1}{2}Q) \leq  C N^{\alpha} e^{- c \delta a/N }s(Q)^{n-1+\delta}.
 \end{equation*}
The optimal estimates for 
 $c_N$ and $C_N$ will appear to be not necessary for the purposes of this paper.
In Lemma \ref{pr:2} we will prove a better bound for $C_\h^{n-1+\delta}(E_a)$
 without using  the uniform lower bound for $c_N$ or polynomial bound for $C_N$.
 
\end{remark}


\subsection{Main tool}
The following lemma  will be severely exploited in the proof of main results. See Section ``Number of cubes with big doubling index'' in \cite{L3} for the proof of the lemma formulated below. We note that the definition of the doubling index in \cite{L3} is slightly different (but the proof remains the same).

\begin{lemmaA*} 
 Let $u$ be a solution to  $\dv(A\nabla u)=0$ in $\Omega$. There exist positive constants $ s_0, N_0, B_0$ that depend on $A,\Omega$ only such that if $Q$ is a cube with $s(Q)<s_0$, $(20n)Q\subset \Omega$, and $Q$ is divided into $B^n$ equal subcubes with $B>B_0$, then the number of subcubes $q$ with $N(u,q)\ge \max (\frac{1}{2} N(u,Q), N_0)$ is less than $B^{n-1-c}$, where $c$ depends on the dimension $n$ only.
\end{lemmaA*}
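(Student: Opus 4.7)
The plan is to argue by contradiction. Write $N=N(u,Q)$ and suppose that the collection $\mathcal{B}$ of ``bad'' subcubes $q$ with $N(u,q)\ge\max(N/2,N_0)$ has cardinality at least $B^{n-1-c}$. I would derive from this that the doubling index of $u$ on some ball of radius $\asymp s(Q)$ sitting inside $(20n)Q$ is at least $(1+c_1)N$ for some dimensional constant $c_1>0$, and then invoke almost monotonicity \eqref{eq:monot} to turn this into a lower bound for $N(u,Q)$ that is strictly larger than $N$. The smallness $s(Q)<s_0$ is precisely what lets the loss factor $(1+c)$ in \eqref{eq:monot} be taken arbitrarily close to $1$ on this scale (by the sharp form of Garofalo--Lin cited in the paper), while $N\ge N_0$ absorbs the additive constant in \eqref{eq:monot}; together these conditions let the gain $c_1$ survive the bookkeeping.

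The central geometric tool I would develop is a \emph{simplex lemma}: if $x_0,\dots,x_n$ are $n+1$ points in a ball $B^{*}\subset (20n)Q$, pairwise separated on the scale of the radius of $B^*$ and in general position (fat simplex), and if on a small ball around each $x_i$ the doubling index of $u$ is at least $N$, then $N(u,B^{*})\ge (1+c_1)N$ for some $c_1=c_1(n)>0$. The proof of this is by iterating the three-spheres inequality \eqref{eq:3sph} along $n$ linearly independent directions provided by the edges of the simplex: each direction propagates a lower bound on $\sup|u|$ from one vertex to the next while compounding a fixed multiplicative gain, so after $n$ steps the enclosing ball inherits a doubling index strictly above $N$ by a dimension-only factor.

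Granted the simplex lemma, the combinatorial step is to show that any family $\mathcal{B}$ with $|\mathcal{B}|\ge B^{n-1-c}$ subcubes of $Q$ must contain $n+1$ members forming a fat simplex at scale $\asymp s(Q)$. I would run an induction on the dimension via coordinate projections: project $\mathcal{B}$ onto a coordinate hyperplane; if the shadow contains at least $B^{(n-1)-c'}$ distinct images, the inductive hypothesis in dimension $n-1$ supplies a fat $(n-1)$-simplex to which one adjoins a bad cube from a complementary fiber, producing a genuine $n$-simplex; otherwise, some fiber (a column of $B$ subcubes) contains $\gtrsim B^{c''}$ stacked bad subcubes, and iterating \eqref{eq:3sph} along this line inside a tube already yields doubling index $>N$ on a ball of radius $\asymp s(Q)$. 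The exponent $n-1-c$ is exactly the threshold at which this dichotomy can be closed, and this is how a dimensional $c=c(n)$ emerges.

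The main obstacle is the simplex lemma, because one must extract the multiplicative gain $(1+c_1)N$ in the doubling index without incurring additive losses that eat the improvement; this is delicate since each application of \eqref{eq:3sph} produces factors $K^{-CN}$ and the number of iterations must stay bounded in terms of $n$ alone. The auxiliary thresholds $s(Q)<s_0$ and $N\ge N_0$ in the statement are precisely what make the loss in \eqref{eq:monot} a small perturbation and make the additive constant $C$ negligible relative to $c_1 N$, so that the dimensional gain from iterating three-spheres along independent edges is not swallowed and a genuine contradiction with $N(u,Q)=N$ is obtained.
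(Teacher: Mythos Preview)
Your overall strategy---contradiction, a simplex lemma that converts spread-out bad cubes into a strict doubling-index gain, and bookkeeping with $s_0,N_0$ to control the multiplicative and additive losses in almost monotonicity---matches the paper's approach (which cites \cite{L3} for the proof and, in the discussion of Lemma~B, names the two ingredients: the \emph{simplex lemma} and the \emph{hyperplane lemma} from \cite{L2}). Where your outline diverges, and where there is a genuine gap, is that you replace the hyperplane lemma by a projection/fiber dichotomy, and this substitution does not close.

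The combinatorial issue is that $B^{n-1-c}$ bad subcubes may very well all sit inside a single hyperplane slice of $Q$ (which contains $B^{n-1}$ subcubes). In that configuration your induction step fails: the shadow on the hyperplane has $B^{n-1-c}$ points, you find a fat $(n-1)$-simplex there, but there is no bad cube off the hyperplane to adjoin, so you never produce a fat $n$-simplex. And your fiber branch does not rescue this: having $\gtrsim B^{c''}$ bad subcubes stacked in a single column, each with $N(u,q)\ge N/2$, does \emph{not} force the doubling index on the enclosing ball above $N$ by iterating \eqref{eq:3sph}. Doubling indices along a line do not accumulate; think of a harmonic polynomial of degree $d$, whose doubling index is $\asymp d$ uniformly in every ball, so a column of cubes with index $d$ gives only index $\asymp d$ on the big ball. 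The hyperplane lemma is exactly the tool that handles the degenerate ``bad cubes concentrate near a hyperplane'' case: it shows that if every subcube in a hyperplane slab has large doubling index, then the solution is forced to have \emph{much} larger doubling index on the big cube (roughly, because it must be small on the central slab yet large on the outer shell, producing extra growth). Without this second ingredient the contradiction does not go through.

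A secondary point: your sketch of the simplex lemma as ``iterate three spheres along the $n$ edges, compounding a gain each time'' is too coarse. The actual argument (see \cite{L2}) compares the growth of $u$ in nested balls around the barycenter to the growth in balls around the vertices and extracts the gain $(1+c_1)$ from the scale ratio between the simplex diameter and the vertex-ball radii, using the fatness to control how the vertex balls embed in the barycentric annuli; it is not an edge-by-edge propagation.
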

\begin{remark}  If we are interested in sets of positive Lebesgue measure only, it would be enough to apply this result with a weaker 
bound on the number of subcubes with large doubling index, namely $B^{n-c}$, see the combinatorial lemma in \cite{LM1}, which is simpler. 
\end{remark}

\section{Proof of the Main result}\label{sec:proof}
\subsection{Reformulations of Theorem \ref{th:m} } 
Clearly Theorem \ref{th:m} is a local result. We formulate an equivalent local version. 
\begin{prop}\label{pr:1}  Let $\Omega$ be a bounded domain in $\R^n$, $A$ satisfy \eqref{eq:Lambda} and $\delta$ and $m$ be positive. There exist
 $C, \gamma>0$, depending on $ A,\Omega, m$ and $\delta$  such that the following holds. Suppose that $u$ is a solution to $\dv(A\nabla u)=0$ in  $\Omega\supset (10n^2)Q$ and let $E\subset \frac{1}{20n} Q$ satisfy $C_\h^{n-1+\delta}(E)\ge ms(Q)^{n-1+\delta}$, then
\[\sup_{Q}|u|\le C(\sup_E|u|)^\gamma(\sup_{(10n^2)Q}|u|)^{1-\gamma}.\]
\end{prop}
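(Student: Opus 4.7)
The plan is to deduce Proposition \ref{pr:1} from a quantitative refinement of Lemma \ref{l:base} -- the forthcoming Lemma \ref{pr:2} -- that bounds the Hausdorff content of a sublevel set by constants which are independent of the doubling index of $u$. Once such a lemma is available, Proposition \ref{pr:1} is a single algebraic manipulation; the real work sits in Lemma \ref{pr:2} itself.

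More precisely, I anticipate that Lemma \ref{pr:2} will take the form: there are $C_0,\alpha,\beta>0$, depending only on $A,\Omega,\delta$, such that for every solution $u$ of $\dv(A\nabla u)=0$ in $(20n)Q$ and every $t>0$,
$$C_\h^{n-1+\delta}\big(\{|u|<t\}\cap\tfrac{1}{20n}Q\big) \le C_0\, t^{\beta}\,(\sup_Q|u|)^{-(\alpha+\beta)}(\sup_{10n^2 Q}|u|)^{\alpha}\,s(Q)^{n-1+\delta}.$$
Granting this, set $M=\sup_{10n^2 Q}|u|$, $\sigma=\sup_Q|u|$, $\varepsilon=\sup_E|u|$. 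Since $E\subset\{|u|\le\varepsilon\}\cap\tfrac{1}{20n}Q$ and $C_\h^{n-1+\delta}(E)\ge m\,s(Q)^{n-1+\delta}$, plugging in $t=\varepsilon$ yields $m\le C_0\,\varepsilon^{\beta}\sigma^{-(\alpha+\beta)}M^{\alpha}$, i.e.
$$\sigma \le (C_0/m)^{1/(\alpha+\beta)}\,\varepsilon^{\beta/(\alpha+\beta)}\,M^{\alpha/(\alpha+\beta)},$$
which is Proposition \ref{pr:1} with $\gamma=\beta/(\alpha+\beta)\in(0,1)$ and $C=(C_0/m)^{1/(\alpha+\beta)}$.

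To prove Lemma \ref{pr:2} I plan an iterative subdivision of $Q$ driven by Lemma A. At each step, every surviving ``good'' cube is split into $B^n$ equal subcubes; by Lemma A at most $B^{n-1-c}$ of the children are ``bad'' (their doubling index fails to halve), and these bad children are discarded. Choosing $B$ large in terms of $m,\delta,c$, the aggregated Hausdorff content of all discarded subcubes across all $K\sim\log_2(N/N_0)$ levels forms a geometric series summing to at most, say, half of $m\,s(Q)^{n-1+\delta}$. At the deepest level every leaf cube has doubling index at most a universal $N_0$, so Lemma \ref{l:base} applies with constants that are uniform in $N$. Summing the leaf estimates, and using \eqref{eq:3sph} to relate each $\sup_{q}|u|$ and $\sup_Q|u|$ back to $\sup_{10n^2 Q}|u|$, produces the claimed Hausdorff-content bound -- the $\sigma^{-(\alpha+\beta)} M^{\alpha}$ combination precisely accounts for the tree of three-sphere inequalities used in the lift.

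The main obstacle will be the uniformity of the exponents $\alpha,\beta$ in $N(u,Q)$: naively each of the $\log N$ levels costs a factor and without care $\gamma$ would depend on $N$. This is avoided by exploiting the sharper $B^{n-1-c}$ bound of Lemma A -- the $-1$ in the exponent is essential when $\delta<1$, as it is exactly what makes the bad-branch Hausdorff content geometrically summable in $B$; the cruder $B^{n-c}$ bound from \cite{LM1} would only suffice in the Lebesgue-measure case $\delta=1$. Organizing the chain of three-sphere inequalities from leaf to root so that the total loss is absorbed into a single power $\sigma^{-\alpha}$, independent of how deep the tree goes, is the delicate point that must be handled carefully in the proof of Lemma \ref{pr:2}.
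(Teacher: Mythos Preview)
Your anticipated form of Lemma~\ref{pr:2} is too strong to be true: no bound of the shape
$C_\h^{n-1+\delta}(\{|u|<t\}\cap\tfrac{1}{20n}Q)\le C_0\,t^\beta\sigma^{-(\alpha+\beta)}M^\alpha\,s(Q)^{n-1+\delta}$
with a \emph{fixed} $\beta>0$ can hold. Take the harmonic function $u=\Re(x_1+ix_2)^N$ in $\R^n$ with the origin placed inside $\tfrac{1}{20n}Q$. Then $\sigma=\sup_Q|u|$ is of order $1$, $M=\sup_{10n^2 Q}|u|$ is of order $e^{cN}$, and $\{|u|<t\}$ contains the ball of radius $t^{1/N}$ about the origin, hence has $(n{-}1{+}\delta)$-content at least $t^{(n-1+\delta)/N}$. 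For $N>(n{-}1{+}\delta)/\beta$ and $t$ small enough this already exceeds $C_0\,t^\beta e^{c\alpha N}$. So the exponent in Lemma~\ref{pr:2} must depend on $N$; the paper proves exactly $C_\h^{n-1+\delta}(E_a)\le Ce^{-\beta a/N}$ and nothing better.

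Your iterative scheme breaks for the same reason. You claim the $-1$ in $B^{n-1-c}$ makes the discarded content summable, but you are summing over the wrong variable. At depth $k$ there can be up to $B^{n(k-1)}$ surviving good parents, each shedding $\le B^{n-1-c}$ bad children of side $s(Q)/B^k$, so the discarded $(n{-}1{+}\delta)$-content at depth $k$ is at most $B^{n(k-1)}\cdot B^{n-1-c}\cdot(s(Q)/B^k)^{n-1+\delta}=B^{k(1-\delta)-1-c}s(Q)^{n-1+\delta}$, which \emph{grows} geometrically in $k$ whenever $\delta<1$; no choice of $B$ saves this over $K\sim\log_2 N$ levels. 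The paper does not discard the bad cubes: it recurses on both branches, feeding bad subcubes back with the same $N$ but a reduced threshold $\tilde a=a-C_1N\log B$, which yields the recursion $M(N,a)\le B^{1-\delta}M(N/2,\tilde a)+B^{-\delta-c}M(N,\tilde a)$ and, by double induction on $(N,a)$, the bound $Ce^{-\beta a/N}$. The passage to Proposition~\ref{pr:1} then requires an extra step absent from your outline: from $m\le C\varepsilon^{\beta/N}$ one only obtains the lower bound $N\ge\gamma|\log\varepsilon|$, and one must invoke the almost-monotonicity \eqref{eq:monot} separately to turn this into $\log\bigl(\sup_{10n^2 Q}|u|/\sup_Q|u|\bigr)\ge c_2N$, after which the three-term inequality follows.
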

The constants $20n$ and $10n^2$  are for technical purposes only. One can replace them by the constant $2$ and the lemma above will remain true. 

One can use the standard argument to deduce Theorem \ref{th:m} from Proposition \ref{pr:1}. We give only a sketch without details. First, find a suitable cube $Q$ with $20n^2 Q \subset \subset \Omega $ and $ C_\h^{n-1+\delta} (\frac{1}{20n} Q \cap E) >0$. Second, apply Proposition \ref{pr:1}. It shows that we can propagate smallness from $E$ onto the cube $Q$. Third, with the help of the three spheres theorem  the standard Harnack chain argument allows to  propagate  smallness from 
$Q$ onto the whole $\K \subset\subset \Omega$.

It remains to prove Proposition \ref{pr:1}, which will follow from the next lemma. All the main ideas of the paper are used  in the proof of the  lemma, reduction of the proposition to the lemma will be given below and is not difficult.
\begin{lemma} \label{pr:2} Suppose that $\dv(A\nabla u)=0$ in $(20n)Q$ and $\sup_{Q}|u|=1$. Let  $N =N(u,Q) \geq 1$. Set as above
$$E_a=\{x\in \frac{1}{2}Q: |u(x)|<e^{-a}\}.$$ 
Then \[C_\h^{n-1+\delta}(E_a)<Ce^{-\beta a/N}s(Q)^{n-1+\delta},\]
for some $C,\beta>0$ that depend on $A,\delta$ only.

\end{lemma}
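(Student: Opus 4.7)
The plan is to prove the bound by induction on $N$. For $N \geq 1$ and $a \geq 0$ define
\[f(a,N) = \sup\frac{C_\h^{n-1+\delta}(\{x \in \tfrac12 Q : |u(x)|<e^{-a}\})}{s(Q)^{n-1+\delta}},\]
the supremum being taken over all cubes $Q$ and solutions $u$ of $\dv(A\nabla u)=0$ in $(20n)Q$ with $\sup_Q|u|=1$ and $N(u,Q)\leq N$. Then $f$ is non-increasing in $a$, non-decreasing in $N$, and bounded by a dimensional constant. The goal is to show $f(a,N) \leq G\,e^{-\beta a/N}$ for constants $G,\beta>0$ depending only on $A$ and $\delta$. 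The base case $N \leq N^*$, for a threshold $N^* = N^*(A)$ to be fixed, follows directly from Lemma~\ref{l:base}, whose constants $\beta(N^*)$ and $M(N^*)$ then depend only on $A$ and $\delta$, provided one takes $\beta \leq \beta(N^*)$ and $G \geq M(N^*)$.

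For the inductive step assume the desired bound for all $N' \leq N/2$. Cover $\frac{1}{2}Q$ by cubes $\tilde q_i$ of side $s(Q)/B$ with $B$ a large constant to be fixed, arranged through $O(1)$ shifted grid partitions so that each $x \in \frac{1}{2}Q$ lies in the central half $\frac{1}{2}\tilde q_i$ of some $\tilde q_i \subset Q$ and the covering has bounded multiplicity. Lemma~A applied to each shifted partition and summed gives that the number of \emph{bad} cubes with $N(u,\tilde q_i)\geq N/2$ is at most $c_n B^{n-1-c}$. Set $M_i = \sup_{\tilde q_i}|u|$; the three-spheres estimate \eqref{eq:3sph} gives $M_i \geq cB^{-CN}$, so renormalizing $v_i = u/M_i$ (which preserves the doubling index) translates $E_a \cap \frac{1}{2}\tilde q_i$ into a sublevel set of $v_i$ at height $\tilde a_i \geq a - CN\log B - O(1)$, which I abbreviate $a-\tau$. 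Bounding bad cubes through $f(a-\tau, N)$ (using monotonicity of $f$ in $N$ together with $N(u,\tilde q_i)\leq N(u,Q)\leq N$) and good cubes through the inductive hypothesis $f(a-\tau, N/2) \leq G e^{-2\beta(a-\tau)/N}$, and summing the bounds weighted by $s(\tilde q_i)^{n-1+\delta} = (s(Q)/B)^{n-1+\delta}$, produces the self-referential inequality
\[f(a,N) \leq c_n B^{-c-\delta}\,f(a-\tau,N) + c_n G\, B^{1-\delta+2\beta C}\,e^{-2\beta a/N}.\]

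Iterating this recurrence $k$ times in $a$ with $N$ fixed, the first term telescopes and the second accumulates into a geometric series with common ratio $c_n B^{-c-\delta+2\beta C}$. Choosing $\beta < (c+\delta)/(2C)$ makes that ratio less than $1$ once $B$ is large, so the series converges. Letting $k\to\infty$ and absorbing the residual $(c_n B^{-c-\delta})^k f(a-k\tau, N) \to 0$ yields $f(a,N) \leq C_1 G\, B^{1-\delta+2\beta C}\, e^{-2\beta a/N}$. This is $\leq G e^{-\beta a/N}$ whenever $a/N$ exceeds a threshold depending only on $B$ and $\beta$; below that threshold the trivial bound $f\leq 1 \leq Ge^{-\beta a/N}$ holds once $G$ is chosen large enough. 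The main obstacle is precisely the factor $B^{1-\delta}$ arising from summing over $\sim B^n$ good subcubes each of content $\sim (s(Q)/B)^{n-1+\delta}$: because $\delta<1$ this factor always exceeds $1$, and a naive one-step recursion cannot close. It is absorbed only because the inductive hypothesis at $N/2$ provides the doubled exponential rate $e^{-2\beta a/N}$; that extra factor of $2$ in the exponent is what compensates for the $B^{1-\delta+2\beta C}$ prefactor once $a/N$ is sufficiently large, allowing the induction to close with $G$ and $\beta$ depending only on $A$ and $\delta$.
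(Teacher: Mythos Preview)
Your proposal is correct and follows essentially the same route as the paper: both set up the same supremum quantity, derive the identical recursive inequality from Lemma~A (good cubes with halved doubling index versus at most $B^{n-1-c}$ bad cubes) together with \eqref{eq:3sph}, use Lemma~\ref{l:base} as the base of the induction in $N$, and then close by a double induction in $N$ and $a$. The only cosmetic differences are that the paper takes the supremum over operators $A$ satisfying \eqref{eq:Lambda} in the unit cube (to absorb rescaling) whereas you normalize by $s(Q)^{n-1+\delta}$ and use shifted grids to land in $\tfrac12\tilde q_i$, and that the paper verifies the bound by step-by-step induction in $a$ while you iterate the recurrence to infinity and sum the geometric series---both arguments hinge on exactly the observation you isolate, namely that the doubled rate $e^{-2\beta a/N}$ from the inductive hypothesis at $N/2$ is what beats the $B^{1-\delta}$ loss from the good cubes.
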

\begin{remark}
This lemma with $\delta=1$ can be written as a version of Remez inequality (see \cite{R}) for solutions of $\dv(A\nabla u)=0$ and the role of the degree of a polynomial is played by the doubling index:
\[\sup_{Q}|u|\le C \sup_E|u|\left(C\frac{|Q|}{|E|}\right)^{CN}\]
where  $C$ depends on $A$ only, $E$ is a subset of $Q$ of a positive measure and $N=N(u,Q)$ is defined by \eqref{eq:double}.  Note that one can also replace the maximal version of the doubling index $N(u,Q)$ by $\log \frac{\sup_{2Q} |u|}{\sup_Q |u|}$ and the statement will remain true. The standard reduction, which we omit,  uses the monotonicity property of the doubling index.
\end{remark}

\subsection{Lemmas \ref{l:base} and \ref{pr:2} imply Proposition \ref{pr:1}.}
 Consider two cases.

 First case: $N=N(u,\frac{1}{10n}Q) \leq 1$. 
Here Lemma \ref{l:base} is applicable for $\frac{1}{10n}Q$ and since $C_\h^{n-1+\delta}(E)>m$ we have
 $$\sup_E |u| \geq c_m \sup_{\frac{1}{10n}Q}|u|.$$
 And by the three spheres (squares) theorem we know
$$ \sup_{Q}|u|\le C(\sup_{\frac{1}{10n}Q}|u|)^\gamma(\sup_{(10n^2)Q}|u|)^{1-\gamma}. $$
 
 Second case: $N=N(u, \frac{1}{10n}Q) \geq 1$.
 Assume that $C_\h^{n-1+\delta}(E)=ms(Q)^{n-1+\delta}>0$, $|u|<\varepsilon$ on $E$ and $\sup_{\frac{1}{10n}Q}|u|=1$. 
We apply Lemma \ref{pr:2} in the cube $\frac{1}{10n}Q$  with  $a=|\log \varepsilon|$. Then $E\subset E_a$ and the lemma implies that $$m<C\varepsilon^{\beta/N}$$ and therefore $$N\ge \gamma|\log\varepsilon|,$$ where $\gamma=\gamma(C, m, \beta)$. 

 It is time to use the definition of  the doubling index, see Section \ref{sec:aux}.
 There exists a ball $B_r(x)$, $x \in \frac{1}{10n}Q $, $r \leq \frac{1}{10n} s(Q)$ such that 
$$ \log \frac{\sup\limits_{B_{10nr}(x)}|u|}{\sup\limits_{B_r(x)}|u|} \geq N - 1/100 .$$

 Note that $B_{10nr}(x)\subset B_{\sqrt n s(Q)}(x)$ and $B_{\sqrt n s(Q)}(x)$ also contains $Q$. Then the monotonicity of the doubling index \eqref{eq:monot} and the assumption $N\ge 1$ implies
\[ \log \frac{\sup\limits_{(10n^2)Q}|u|}{\sup\limits_{ Q}|u|} \geq\log \frac{\sup\limits_{B_{10n\sqrt n s(Q)}(x)}|u|}{\sup\limits_{ B_{ \sqrt n s(Q)}(x)}|u|} \geq c_1 \log \frac{\sup\limits_{B_{10nr}(x)}|u|}{\sup\limits_{B_r(x)}|u|} \geq c_2N\ge c_2\gamma|\log\epsilon|\] 
Thus Proposition \ref{pr:1} follows. It remains to prove Lemma  \ref{pr:2}.

\subsection{Proof of Lemma \ref{pr:2}}
Now, the ellipticity and Lipschitz constants (see \eqref{eq:Lambda}) $\Lambda_1 \geq 1$  and $\Lambda_2>0$ are  fixed parameters and  $Q_0$ is the unit square in $\R^n$. Numbers $N > 1$ and $a>0$ are variables. 
Let $$m(u,a)=C_\h^{n-1+\delta}\{x\in Q_0: |u(x)|<e^{-a}\sup_{Q_0}|u|\},$$ and
 \[M(N,a)=\sup_{*}m(u,a),\] where the supremum is taken over all elliptic operators $\dv(A \nabla \cdot )$ and functions $u$ satisfying the following conditions in $20n Q_0$:
\begin{itemize}
 \item[(i)]  $A(x)=[a_{ij}(x)]_{1\le i,j\le n}$ is a symmetric uniformly elliptic matrix with Lipschitz entries satisfying \eqref{eq:Lambda},
 \item[(ii)] $u$ is a solution to $\dv(A\nabla u)=0$ in $20n Q_0$,
\item[(iii)] $N(u,Q_0)\le N$. 
\end{itemize}
Our aim is to show that
\begin{equation}\label{eq:pro}
M(N,a)\le Ce^{-\beta a / N}.
\end{equation}
 The constant $\beta>0$ will be chosen later and will not depend on $N$.

 We can always assume that $$a/N \gg 1$$ by making the constant $C$ sufficiently large.
 By Lemma \ref{l:base} we can also assume that $N$ is sufficiently large, in particular $N/2 \geq N_0$, where $N_0$ is the constant 
 from Lemma A.

The proof  contains several steps.
 First, with the help of Lemma A we  prove a recursive inequality for $M(N,a)$. Then we show how this inequality implies the exponential bound  \eqref{eq:pro} by a double induction argument on $a,N$. 
\subsection*{Recursive inequality.} We show that
\begin{equation} \label{eq:rec}
 M(N,a) \leq B^{1-\delta}  M( N/2, a - C_1 N \log B )+ B^{-\delta -c} M(N, a -C_1 N \log B).
\end{equation}
The constant $C_1$ will be specified later; we choose  $B=B_0+1$ and $c$ from Lemma A.

 Fix  a solution $u$ to the elliptic equation $\dv(A \nabla u)=0$  with $N(u,Q_0)\le N$. Divide $Q_0$ into $B^n$ subcubes $q$.
 Lemma A claims that we can partition cubes $q$ into two  groups: a group of good cubes with $N(u,q)  \leq N/2$ and a
 group of bad cubes with $N/2 \leq N(u,q)  \leq N$ such that the number of all bad cubes is smaller than $B^{n-1-c}$ (and the number of all good cubes is smaller than the total number of cubes $B^{n}$). We have
\begin{equation*} \label{eq:m1}
m(u,a)\le  \sum\limits_{q}C_\h^{n-1+\delta}(\{x\in q: |u(x)|<e^{-a}\sup_{Q_0}|u|\}).
\end{equation*}
 By  (\ref{eq:3sph}) we see that
\begin{equation}\label{eq:sup}
\sup_{q}|u|\ge c_1 B^{-C_1 N}\sup_{Q_0}|u|.
\end{equation} 
 Since $N$ is sufficiently large, we can forget about $c_1$ above by increasing $C_1$. We continue to estimate $m(u,a)$:
\begin{equation} \label{eq:m2}
m(u,a)\le  \sum\limits_{q}C_\h^{n-1+\delta}(\{x\in q: |u(x)|<e^{-a} B^{C_1 N}\sup_{q}|u|\})
\end{equation}
\begin{equation*} \label{eq:m3}
=  \sum_{\text{good\ } q}+\sum_{\text{bad\ } q} C_\h^{n-1+\delta}(\{x\in q: |u(x)|<e^{-\tilde a} \sup_{q}|u|\})
\end{equation*}
where 
\begin{equation*}
\tilde a = a -  C_1 N \log B.
\end{equation*}
 Now, we estimate each sum individually. If $q$ is a good cube, then
\begin{equation*}
C_\h^{n-1+\delta}(\{x\in q: |u(x)|<e^{-\tilde a} \sup_{q}|u|\}) \leq B^{-(n-1+\delta)} M(N/2, \tilde a)
\end{equation*}
 Above we used the scaling property of $C_\h^{n-1+\delta}$ and the fact that the restriction of $u$ to a cube $q$ corresponds to a solution of another elliptic PDE in the unit cube, the new equation can be written in the divergence form with some coefficient matrix  which satisfies the same estimate \eqref{eq:Lambda}.

Since the total number of good cubes is smaller than $B^n$ 
\begin{equation*}
\sum_{\text{good\ } q} \leq B^{1-\delta} M(N/2, \tilde a)
\end{equation*}
 We know that the number of bad  cubes $q$ is smaller than $B^{n-1-c}$. Hence
\begin{equation*}
\sum_{\text{bad \ } q} \leq B^{n-1-c} B^{-(n-1+\delta)} M(N, \tilde a) = B^{-c-\delta} M(N, \tilde a) .
 \end{equation*}
 Adding the inequalities for bad and good cubes and taking the supremum over $u$, we obtain the recursive inequality \eqref{eq:rec} for $M(N,a)$.

\subsection*{Recursive inequality implies  exponential bound.}

We will now prove that 
\begin{equation}\label{eq:pro2}
M(N,a)\le Ce^{-\beta a / N}.
\end{equation}
 by a double induction on $N$ and $a$.
 Without loss of generality we may assume $N=2^l$, where $l$ is an integer number. 
 Suppose that we know \eqref{eq:pro2} for $N=2^{l-1}$ and all $a>0$ and now we wish to establish it 
 for $N=2^{l}$.  By Lemma  \ref{l:base} we may assume 
 $l$ is sufficiently large. So we can say that  Lemma  \ref{l:base} gives the basis for the induction.
 For a fixed $l$ we argue by induction on $a$ with step  $C_1N\log B$. Recall that $B$ is a sufficiently large number for which Lemma A holds. 
 We will assume that $a>> N$, namely $a > C_0N \log B$, where $C_0>0$ will be chosen later.
  For $a<C_0N \log B$ the inequality is true if $C$ is  large enough.

 By the induction assumption we have $$M(N,a - C_1N\log B)\le Ce^{-\beta a / N + C_1 \beta \log B}$$
and 
$$ M(N/2,a - C_1N\log B)\le Ce^{-2 \beta a / N + 2C_1 \beta\log B}.$$

Finally, we use the recursive inequality \eqref{eq:rec} and get

$$M(N,a) \leq CB^{1-\delta}e^{-2\beta a/N+2C_1\beta \log B}+CB^{-\delta-c}e^{-\beta a/N + C_1\beta\log B}. $$
 Our goal is to obtain the following inequality 
$$ B^{1-\delta}e^{-2\beta a/N+2C_1\beta \log B}+B^{-\delta-c}e^{-\beta a/N+ C_1\beta \log B} \leq e^{-\beta a/N}$$
for $a/N>C_0\log B$.
Dividing by $e^{-\beta a/N}$ we reduce it to
\[B^{1-\delta+2C_1\beta}e^{-\beta a/N}+ B^{-\delta-c+C_1\beta}\le 1.\]
Now, recall that $a/N>C_0\log B$
and the last inequality follows from
\[B^{1-\delta+2C_1\beta - C_0\beta }+ B^{-\delta-c+C_1\beta}\le 1.\]

The last inequality can be achieved with the proper choice of the parameters: $B>2,\delta, c, C_1>0$ are fixed,  we choose $\beta$ to be small enough so that the second term is less than $1-\varepsilon$ and then choose large $C_0$ to make the first term smaller than $\varepsilon$. Thus the inequality above holds for all sufficiently  large  $a/N$.
As we mentioned above, for small $a/N$  the inequality $\eqref{eq:pro2}$ is true if we choose $C$ to be large.

\begin{remark}
One can notice that the induction step is working for negative $\delta$ such that $ -c <\delta$.
However the induction basis step (Lemma \ref{l:base}) is not true for negative $\delta$. For instance, zeroes of harmonic functions in $\mathbb{R}^n$ are sets of dimension $n-1$.
But the induction basis step appears to be true for gradients of solutions, which have better unique continuation properties than the solutions itself.  
\end{remark}

\section{Propagation of smallness for the gradients of solutions} \label{sec:grad}
\subsection{Formulation of the result}
As above we assume that $u$ is a solution of an elliptic equation \eqref{eq:ell} in divergence form in a bounded domain $\Omega \subset \mathbb{R}^n$ and the coefficients satisfy \eqref{eq:Lambda}. 
\begin{theorem}\label{th:m2}  There exists a constant $c \in (0,1)$ that depends only on the dimension $n$ such that the following holds. Let $m,\delta, \rho$ be positive numbers and suppose  sets $E,\K \subset \Omega$ satisfy $$C_\h^{n-1-c+\delta}(E)> m, \quad \dist(E,\partial \Omega) > \rho, \quad
 \dist(\K,\partial \Omega) > \rho.$$ Then there exist
 $C, \gamma>0$, depending on $m,\delta, \rho, \Lambda_1, \Lambda_2,\Omega$ only (and independent of $u$) such that 
$$\sup_{\K}|\nabla u|\le C(\sup_E|\nabla u|)^\gamma(\sup_{\Omega}|\nabla u|)^{1-\gamma}.$$
\end{theorem}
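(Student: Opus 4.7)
The plan is to mirror the proof of Theorem \ref{th:m} verbatim after two substitutions: replace $|u|$ by $|\nabla u|$ throughout, and replace the Hausdorff-content exponent $n-1+\delta$ by $n-1-c+\delta$, where $c=c(n)>0$ is the dimension-dependent constant appearing in Lemma A. The reason a smaller exponent is admissible for gradients is that the critical set $\{\nabla u=0\}$ has Hausdorff dimension strictly smaller than the zero set of $u$, so the base step of the induction has more room.

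Following Section \ref{sec:proof}, I would first reduce Theorem \ref{th:m2} to a local proposition analogous to Proposition \ref{pr:1} via the three-spheres-plus-Harnack-chain argument; the three spheres theorem for $|\nabla u|$ is immediate since each partial derivative satisfies a uniformly elliptic equation with Lipschitz coefficients. That local proposition reduces, in turn, to a Remez-type inequality paralleling Lemma \ref{pr:2}: if $N=N(\nabla u,Q)\ge 1$ is the maximal doubling index of $|\nabla u|$ on a cube $Q$, then
\[C_\h^{n-1-c+\delta}\left(\{x\in\tfrac12 Q:|\nabla u(x)|<e^{-a}\sup_Q|\nabla u|\}\right)\le Ce^{-\beta a/N}s(Q)^{n-1-c+\delta}.\]

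For the inductive step, Lemma A applies to $u$ directly, and the doubling indices of $|\nabla u|$ and of $u$ are comparable up to additive constants by standard elliptic estimates. Subdividing $Q$ into $B^n$ subcubes produces at most $B^{n-1-c}$ bad subcubes (large doubling index), the good subcubes halving the doubling index. With the new content exponent, scaling gives good-cube contribution $B^{n}\cdot B^{-(n-1-c+\delta)}=B^{1+c-\delta}$ and bad-cube contribution $B^{n-1-c}\cdot B^{-(n-1-c+\delta)}=B^{-\delta}$, yielding a recursion of the same shape as \eqref{eq:rec}. The bad-cube term is now the easy one (tiny whenever $\delta>0$), and the good-cube coefficient $B^{1+c-\delta}$ is absorbed by the halved doubling index exactly as in Section \ref{sec:proof}: fix small $\beta>0$, then take $a/N$ large.

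The only genuinely new ingredient — and the main obstacle — is the base case of the induction, i.e.~the analog of Lemma \ref{l:base} for the gradient: given a bound $N(\nabla u,Q)\le N_0$, show that $C_\h^{n-1-c+\delta}(E_a)\le Me^{-\beta a}s(Q)^{n-1-c+\delta}$. The proof of Lemma \ref{l:base} used a sign-change-via-Harnack dichotomy combined with $\mathcal H^{n-1}$ upper and lower bounds on $\{u=0\}$; neither tool is directly available for $|\nabla u|$. The substitute, as flagged in the \textbf{Precaution} of the introduction, is the quantitative sublevel-set estimate for $|\nabla u|$ of \cite{CNV}, which provides Hausdorff-content control at the scale $n-1-c$ in terms of the doubling index of $u$. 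Once the base case is secured via this input, the double induction of Section \ref{sec:proof} carries through unchanged and yields Theorem \ref{th:m2}.
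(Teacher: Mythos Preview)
Your overall architecture --- reduce to a local proposition, then to a Remez-type sublevel estimate for $|\nabla u|$, then run the same double induction with base case supplied by \cite{CNV} --- is exactly the paper's strategy. Two of the technical steps you propose, however, do not go through as written.

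First, the justification ``each partial derivative satisfies a uniformly elliptic equation with Lipschitz coefficients'' is false when $A$ is merely Lipschitz: differentiating $\dv(A\nabla u)=0$ produces a lower-order term $\dv((\partial_k A)\nabla u)$ with only bounded coefficients, so $\partial_k u$ is not itself a solution of an equation in the class considered. The paper obtains the three-spheres theorem (and the almost-monotonicity of $N(\nabla u,\cdot)$) differently: apply the three-spheres theorem to the \emph{solution} $u(\cdot)-u(x_0)$ and pass to $|\nabla u|$ via standard interior gradient estimates.

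Second, the claim that ``Lemma A applies to $u$ directly, and the doubling indices of $|\nabla u|$ and of $u$ are comparable up to additive constants'' is also false: adding a large constant to $u$ leaves $N(\nabla u,\cdot)$ unchanged but drives $N(u,\cdot)$ to zero, so one cannot read off the bad-cube count for $N(\nabla u,\cdot)$ from Lemma A applied to $u$. What \emph{is} true is that $N(\nabla u,B)$ is comparable to the doubling index of $u(\cdot)-u(x_B)$, but the subtracted constant varies from cube to cube, so this is not a black-box application of Lemma A either. The paper instead states and proves a separate Lemma~B for $N(\nabla u,\cdot)$ by revisiting the proof of Lemma A: the hyperplane lemma goes through after subtracting a constant, and the simplex lemma requires a sharpened monotonicity constant obtained by first normalizing $A(0)$ to the identity.

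A minor point: the base case from \cite{CNV} actually controls $C_\h^{n-2+\delta}(E_a)$ (Lemma~\ref{l:base2}), which is stronger than the $n-1-c+\delta$ content you need; the bottleneck at $n-1-c$ comes from the induction step, not the base case.
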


%
\subsection{Modifications of the proof.}
 We shall use the notion of doubling index for $|\nabla u|$. Let $B=B(x_0,r)$ be a ball in $\mathbb{R}^n$. Define
$$ N(\nabla u,B)=   \log\frac{\sup_{2B} |\nabla u|}{\sup_B |\nabla u|}.$$
 Assume $r \leq 1$. The doubling index is almost monotonic:
 \begin{equation} \label{eq:c}
N(\nabla u,tB) \leq N(\nabla u,B) (1+c) +C 
 \end{equation}
for $t \leq 1/2$. The constants $c,C>0$ depend on $\Lambda_1,\Lambda_2$ (the ellipticity and Lipschitz constants) and the dimension $n$.
The monotonicity of the doubling index for $|\nabla u|$ follows from the three spheres theorem for the function $|u(\cdot) - u(x_0)|$
 and standard elliptic estimates.  A similar modification appeared in \cite{CNV}, see also \cite{GL}.
 We also need a modified doubling index for a cube $Q$:

\[N(\nabla u,Q)=\sup_{x\in Q, r\le s(Q)}\log\frac{\sup_{B(x, 10nr)}|\nabla u|}{\sup_{B(x,r)}|\nabla u|}.\]

The proof of Theorem \ref{th:m2} is parallel to the proof of Theorem \ref{th:m}. 
We need to establish  analogs of    Lemma A (induction step), Lemma \ref{l:base} (basis of induction),  and Lemma \ref{pr:2} (estimate of the Hausdorff content), where $|u|$ should be replaced by $|\nabla u|$. We formulate such statements below.

\begin{lemmaB*} 
 There exist positive constants $ s_0, N_0, B_0$ 
 that depend on $\Lambda_1,\Lambda_2$ and the dimension $n$ only such that if $Q$ is a cube with  side $s(Q)<s_0$ and $Q$ is divided into $B^n$ equal subcubes with $B>B_0$, then the number of subcubes $q$ with $N(\nabla u,q)\ge \max (\frac{1}{2} N(\nabla u,Q), N_0)$ is less than $B^{n-1-c}$, where $c \in (0,1)$ depends on the dimension $n$ only.
\end{lemmaB*}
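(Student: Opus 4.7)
The plan is to reproduce the proof of Lemma~A from \cite{L3} line by line, with $|u|$ replaced by $|\nabla u|$ throughout. The key point is that every analytic ingredient used in the scalar proof has a gradient analogue already available: the almost monotonicity of the doubling index is provided by \eqref{eq:c}, and the quantitative estimates for sublevel sets of the gradient that feed the combinatorial step are supplied by \cite{CNV}. Once these replacements are in place, the combinatorial architecture of \cite{L3} transfers essentially verbatim.

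Concretely, I would first iterate \eqref{eq:c} to obtain an additivity statement along chains of concentric balls: for a nested sequence $B_1\subset B_2\subset\cdots\subset B_k$ with comparable consecutive radii, the telescoping sum $\sum_i N(\nabla u, B_i)$ is controlled by a bounded constant times $N(\nabla u, B_k)$ plus an $O(k)$ additive error. Applied to a coordinate tube of $B$ subcubes inside $Q$, this shows that at most $O(1)$ subcubes in any single tube can satisfy $N(\nabla u,q)\ge \max(N/2, N_0)$, which already yields the weaker bound $O(B^{n-1})$ on the number of bad subcubes. Here $N_0$ must be chosen large enough (depending on the constant $C$ in \eqref{eq:c}) to absorb the additive error in the monotonicity, and $s_0$ small enough so that the multiplicative constant $1+c$ in \eqref{eq:c} is close to $1$.

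The main obstacle, as in the scalar case, is the gain of the extra factor $B^{-c}$. In \cite{L3} this improvement is obtained by a simultaneous argument over families of tubes: one shows that certain patterns of bad subcubes across a codimension-one slice of $Q$ would force $|u|$ to be too small on too large a set, contradicting quantitative unique continuation. For the gradient version the corresponding ``quantitative unique continuation on codimension-one sets" is exactly what the sublevel set estimates of \cite{CNV} provide for $|\nabla u|$; they play the role that Lemmas~\ref{lem:up} and \ref{lem:low} play in the proof of Lemma~A. I expect this step to be the delicate part of the argument: one has to verify that the estimates from \cite{CNV} are stated with constants depending only on $\Lambda_1,\Lambda_2$ and the dimension, that they are compatible with rescaling, and that they interface correctly with the maximal doubling index $N(\nabla u,Q)$ rather than only with the ball version $N(\nabla u,B)$.

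Finally, I would fix the three parameters: $s_0$ small so \eqref{eq:c} holds with a prescribed small multiplicative excess after rescaling; $N_0$ large so the additive error in the monotonicity is negligible compared to $N/2$; and $B_0$ large enough that the combinatorial/geometric argument of \cite{L3}, driven by \cite{CNV}, improves the naive $B^{n-1}$ bound to $B^{n-1-c}$. As in Lemma~A, the numerical exponent $c\in(0,1)$ depends only on the dimension, since it is inherited from the dimension-only constant in the scalar hyperplane argument.
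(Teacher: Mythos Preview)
Your plan misidentifies the analytic ingredients of the proof of Lemma~A, and consequently the modifications needed for Lemma~B. Lemmas~\ref{lem:up} and~\ref{lem:low} (the zero-set estimates) are \emph{not} used in the proof of Lemma~A; they are used only in Lemma~\ref{l:base}, the base case of the induction. Likewise, the estimates from \cite{CNV} enter the gradient argument only in the proof of Lemma~\ref{l:base2}, the gradient base case; they play no role in Lemma~B. The tube/additivity observation you describe is a reasonable way to get an $O(B^{n-1})$ bound, but the passage from $B^{n-1}$ to $B^{n-1-c}$ in \cite{L3} is not driven by sublevel-set estimates or by ``quantitative unique continuation on codimension-one sets.''

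The actual mechanism in \cite{L3} is the pair \emph{simplex lemma / hyperplane lemma} from \cite{L2}, and these are what you must adapt. For the hyperplane lemma essentially nothing changes: one works with $u-u(x_0)$ for an appropriate center $x_0$, so that smallness of $|\nabla u|$ translates into smallness of $|u-u(x_0)|$ on the relevant hyperplane, and the scalar hyperplane lemma applies verbatim. The simplex lemma is more delicate: its proof requires the almost-monotonicity \eqref{eq:c} with the multiplicative excess $c$ \emph{small}, depending only on the dimension. To obtain this for $|\nabla u|$ one first makes a linear change of coordinates so that $A(0)$ becomes the identity and $\Lambda_1$ is close to $1$ on a small ball, then derives a sharp three-spheres inequality for $u-u(0)$ as in \cite{L2}, and finally passes from $u-u(0)$ to $|\nabla u|$ via standard elliptic estimates. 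This is where $s_0$ is spent. Once the simplex and hyperplane lemmas are in place for the gradient doubling index, the combinatorial architecture of \cite{L3} transfers without change and yields the exponent $n-1-c$ with $c=c(n)$.
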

\begin{lemma} \label{l:base2} Let $Q_0$ be the unit cube in $\mathbb{R}^n$. Suppose that $\dv(A\nabla u)=0$ in $(20n)Q_0$, $\sup_{Q_0}|\nabla u|=1$, and $N(\nabla u,Q_0)\le N_0$, then for 
$$E_a=\{x\in Q_0: |\nabla u(x)|<e^{-a}\}$$ 
we have \[C_\h^{n-2+\delta}(E_a)<Ce^{-\beta a}\] 
for some $\beta,C$ depending on $N_0$, $\Lambda_1, \Lambda_2$, $\delta$.
\end{lemma}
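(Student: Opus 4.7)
The plan is to adapt the proof of Lemma~\ref{l:base}, replacing $u$ by $|\nabla u|$ and the zero set $\{u=0\}$ by the critical set $\{\nabla u = 0\}$. After normalizing $\sup_{Q_0}|\nabla u|=1$, we may assume $a$ is large (small $a$ is absorbed into the constant $C$), and we partition $Q_0$ into $K^n$ equal subcubes of side $1/K$, with $K = \lfloor e^{\kappa a}\rfloor$ for a small constant $\kappa > 0$ to be chosen.

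For each subcube $q_i$ meeting $E_a$, pick $x^* \in q_i$ with $|\nabla u(x^*)| < e^{-a}$, and split into two cases. If $\nabla u$ is nonvanishing throughout $2q_i$, a Harnack-type inequality for $|\nabla u|$ on such regions---available via the quantitative gradient estimates of \cite{CNV}, or by considering a suitable directional derivative $\partial_\omega u$ which satisfies an elliptic equation and has constant sign on $2q_i$---yields $\sup_{q_i}|\nabla u| \leq Ce^{-a}$. On the other hand, the three-spheres lower bound $\sup_{q_i}|\nabla u| \geq c K^{-CN_0}$, a consequence of the almost-monotonicity \eqref{eq:c} and the normalization, produces a contradiction once $\kappa < 1/(CN_0)$ and $a$ is sufficiently large. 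Thus every subcube $q_i$ meeting $E_a$ must contain a critical point of $u$ within its double.

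Let $S$ be the family of such cubes. We invoke the bound from \cite{CNV} that $\h^{n-2}(\{\nabla u=0\}\cap Q_0) \leq C(N_0)$, with $C(N_0)$ depending only on $N_0$, $\Lambda_1$, $\Lambda_2$; a standard covering argument then gives $|S| \leq C(N_0) K^{n-2}$. Subadditivity and the scaling property of the Hausdorff content yield
$$
C_\h^{n-2+\delta}(E_a) \leq \sum_{q_i \in S} C_\h^{n-2+\delta}(q_i) \leq C(N_0)\, K^{n-2}\cdot K^{-(n-2+\delta)} = C(N_0)\, K^{-\delta} \leq C e^{-\kappa\delta a},
$$
proving the lemma with $\beta = \kappa\delta$.

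The principal obstacle is the Harnack-type step in the dichotomy: since $|\nabla u|^2$ is only a subsolution, classical Harnack does not apply directly to $|\nabla u|$, and one must rely either on the CNV framework or on a careful argument with directional derivatives on regions where $\nabla u$ is continuous and nonvanishing. A secondary concern is verifying that the $(n-2)$-content bound for the critical set from \cite{CNV} depends only on the doubling index $N_0$ and the structural constants $\Lambda_1, \Lambda_2$, so that the final constants inherit the desired dependence.
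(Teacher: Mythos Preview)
Your overall scheme---partitioning into $K^n$ cubes with $K\sim e^{\kappa a}$ and showing that $E_a$ lies in a small family of subcubes---is correct and matches the paper. The gap is exactly the step you flagged as the ``principal obstacle,'' and neither of your proposed repairs actually works.

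The dichotomy ``$\nabla u$ vanishes somewhere in $2q_i$ or not'' does not yield a Harnack inequality in the second branch. Non-vanishing of $\nabla u$ on $2q_i$ gives no uniform control of the oscillation of $|\nabla u|$: a critical point just outside $2q_i$ can make $|\nabla u|$ tiny at $x^*$ while it remains of unit size elsewhere in $q_i$. The directional-derivative idea fails for the same reason: fixing one $\omega$ for which $\partial_\omega u$ has a sign on $2q_i$ gives Harnack for $|\partial_\omega u|$, not for $|\nabla u|$, since other components may dominate and vanish inside $2q_i$. And CNV does not provide a Harnack inequality for $|\nabla u|$; what it bounds is precisely the \emph{effective critical set}, i.e.\ the set of scales and centers where such an inequality \emph{fails}.

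There is also a problem downstream. The bound $\h^{n-2}(\{\nabla u=0\}\cap Q_0)\le C(N_0)$ is not what \cite{CNV} proves; CNV gives Minkowski-type estimates of order $n-2+\delta$ for every $\delta>0$. Even granting an $\h^{n-2}$ bound, your covering step ``$|S|\le C(N_0)K^{n-2}$'' would need a per-cube lower bound $\h^{n-2}(\{\nabla u=0\}\cap 4q_i)\ge c\, s(q_i)^{n-2}$ analogous to Lemma~\ref{lem:low}, which you do not have.

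The paper avoids both issues by using the CNV effective-critical-set estimate directly, with no dichotomy. Call a subcube $q$ \emph{bad} if $\inf_q|\nabla u|<c\sup_{2q}|\nabla u|$. Theorem~1.17 of \cite{CNV} bounds the number of bad cubes at scale $1/K$ by $C(N_0,\delta)K^{n-2+\delta}$. On a good cube the three-spheres lower bound gives $\inf_q|\nabla u|\ge c'K^{-CN_0}$, so choosing $K$ with $e^{-a}\sim K^{-CN_0}$ forces $E_a$ into the union of bad cubes; summing the $(n-2+2\delta)$-content over those cubes gives $CK^{-\delta}\le Ce^{-\beta a}$. Thus the correct CNV input is not a Harnack inequality but a covering bound on the set where Harnack fails, and it replaces your Harnack step and your critical-set counting step simultaneously.
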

\begin{lemma} \label{pr:3} Let $Q_0$ be the unit cube in $\mathbb{R}^n$. Suppose that $\dv(A\nabla u)=0$ in $(20n)Q_0$ and $\sup_{Q_0}|\nabla u|=1$. Let a number $N =N(\nabla u,Q_0) \geq 1$. Set 
$$E_a=\{x\in \frac{1}{2}Q_0: |\nabla u(x)|<e^{-a}\}.$$
There exists $c\in (0,1)$ that depends only on the dimension $n$ such that   
 \[C_\h^{n-1-c+\delta}(E_a)<Ce^{-\beta a/N},\]
for some $C,\beta>0$ that depend on $\Lambda_1, \Lambda_2, \delta, n$ only.
\end{lemma}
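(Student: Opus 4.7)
The plan is to mirror the proof of Lemma \ref{pr:2} almost line-for-line, replacing $|u|$ by $|\nabla u|$ throughout, Lemma A by Lemma B, and Lemma \ref{l:base} by Lemma \ref{l:base2}. Following the scalar case, I would define
\[M(N,a)=\sup_{*}C_\h^{n-1-c+\delta}\bigl(\{x\in Q_0: |\nabla u(x)|<e^{-a}\sup_{Q_0}|\nabla u|\}\bigr),\]
where the supremum is over elliptic operators satisfying \eqref{eq:Lambda} and solutions $u$ of $\dv(A\nabla u)=0$ in $(20n)Q_0$ with $N(\nabla u,Q_0)\le N$, and $c>0$ is the constant supplied by Lemma B. Since $E_a\subset\tfrac12 Q_0\subset Q_0$, it suffices to prove $M(N,a)\le Ce^{-\beta a/N}$. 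This will be done by double induction, first along the sequence $N=2^l$ and then on $a$ with step $C_1N\log B$.

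The recursive inequality is obtained just as in the proof of Lemma \ref{pr:2}. Partition $Q_0$ into $B^n$ equal subcubes $q$ for a fixed $B>B_0$. By Lemma B, at most $B^{n-1-c}$ of them are bad (satisfying $N(\nabla u,q)\ge N/2$), the rest being good. Iterating the sub-ball inequality $\sup_{B(x,10nr)}|\nabla u|\le e^{N}\sup_{B(x,r)}|\nabla u|$ built into $N(\nabla u,Q_0)\le N$ yields the three-spheres-type lower bound $\sup_q|\nabla u|\ge c_1 B^{-C_1N}\sup_{Q_0}|\nabla u|$. Writing $\tilde a=a-C_1N\log B$, rescaling each subcube to the unit cube (which preserves the admissible class, since the Lipschitz constant of the rescaled coefficients only decreases) and applying the scaling identity for the Hausdorff content gives
\[M(N,a)\le B^{1+c-\delta}\,M(N/2,\tilde a)+B^{-\delta}\,M(N,\tilde a).\]
Compared with the scalar case, whose coefficients were $B^{1-\delta}$ and $B^{-c-\delta}$, a factor of $B^c$ has migrated from the bad-cube term to the good-cube term, but the bad-cube coefficient still decays with $B$ since $\delta>0$.

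For the base of the induction on $N$, I would invoke Lemma \ref{l:base2}: when $N\le N_0$ it delivers $C_\h^{n-2+\delta}(E_a)\le Ce^{-\beta a}$, and because $n-2+\delta<n-1-c+\delta$ while $E_a$ has bounded diameter, this majorizes $C_\h^{n-1-c+\delta}(E_a)$ up to a constant depending on $\diam(Q_0)$. Together with $e^{-\beta a}\le e^{-\beta a/N}$ for $N\ge 1$, this gives the inductive estimate at the base. The inductive step is carried out by substituting the inductive hypotheses into the recursive inequality, dividing by $e^{-\beta a/N}$, and imposing $a/N\ge C_0\log B$, reducing everything to
\[B^{1+c-\delta+2C_1\beta-C_0\beta}+B^{-\delta+C_1\beta}\le 1,\]
which will be arranged by first choosing $\beta>0$ small enough to make the second exponent negative, then choosing $C_0$ large enough to make the first exponent negative.

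The conceptual point, and what I regard as the only substantive obstacle, is already flagged in the Remark after the proof of Lemma \ref{pr:2}: the induction step in that argument already works for any dimension $n-1+\delta'$ with $\delta'>-c$, and in particular for $\delta'=\delta-c$. What was missing in the scalar case was a base case at this smaller dimension, because zero sets of solutions can reach dimension $n-1$. Lemma \ref{l:base2} precisely supplies such a base case at dimension $n-2+\delta$, leveraging the better unique continuation properties of $\nabla u$ (its zero set has Hausdorff dimension at most $n-2$). Thus the real content of the proof is the observation that the existing induction closes at the smaller dimension once Lemma A and Lemma \ref{l:base} are replaced by Lemma B and Lemma \ref{l:base2}, and the value of $c$ one obtains is exactly the one provided by Lemma B; no essentially new technical ingredients are needed beyond the gradient-adapted three spheres theorem and the monotonicity \eqref{eq:c}.
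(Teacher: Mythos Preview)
Your proposal is correct and follows exactly the approach the paper outlines: mirror the proof of Lemma \ref{pr:2} with $|\nabla u|$ in place of $|u|$, Lemma B in place of Lemma A, and Lemma \ref{l:base2} supplying the induction base at the lower dimension $n-2+\delta$. The recursive coefficients $B^{1+c-\delta}$ and $B^{-\delta}$ you compute are the right ones, and your observation that the induction step already closes for dimensions above $n-1-c$ (as flagged in the Remark following the proof of Lemma \ref{pr:2}) is precisely the point the paper makes.
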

 Only the proof of Lemma \ref{l:base2} requires modifications, the other changes are minor.

\subsection{Outline of changes.}
The reduction of Theorem \ref{th:m2} to Lemma \ref{pr:3} is not difficult and remains the same as in Section \ref{sec:proof}. To prove Lemma \ref{pr:3} one has to replace the used Lemma \ref{l:base}  by its analog for $|\nabla u|$ (Lemma \ref{l:base2}), which we  prove below. The proof is based on new results from \cite{CNV}.

 The proof of Lemma B repeats the proof of Lemma A (\cite{L3}). There are two main ingredients in the proof: simplex lemma and hyperplane lemma from \cite{L2}. We don't formulate those lemmas here, see \cite{L2}.
There are no changes in the proof of hyperplane lemma, except that one has to subtract a constant from the function.

 To prove the simplex lemma we need a sharper version of the monotonicity property  of the doubling index as it was in the proof of the original simplex lemma. Namely, one has to make $c$ in inequality \eqref{eq:c}  a sufficiently small constant, depending only on the dimension. One has to make  a linear change of coordinates such that $A(0)$ turns into  the identity matrix and
 $\Lambda_1$ is close to $1$ in a small neighborhood of the origin. After that one obtains a sharper version of the three spheres theorem for $u-u(0)$ as it is done in \cite{L2}. Then one should use standard elliptic estimates to provide smallness of $c$ in \eqref{eq:c}.

  To prove Lemma \ref{pr:3} one has to use the same induction argument as in Lemma \ref{pr:2}.
The induction step remains the same, but one has to work with the doubling index for $|\nabla u|$ in place of $|u|$ and use Lemma B in place of Lemma A. Concerning the basis of induction, which is  Lemma \ref{l:base2}, a different argument is needed, and we will use a result  from \cite{CNV}, which estimates the size of the neighborhood of the effective critical set. 
That would give us an analog of Lemma \ref{l:base} for $|\nabla u|$, but now the dimension of the set $E$ will be allowed to be smaller than $n-1$, but bigger than $n-2$.
Unfortunately, the induction step works only for dimensions bigger $n-1-c$ only and that is the main obstacle for improvement towards $n-2$.


\subsection{Proof of Lemma \ref{l:base2}}
The lemma is a corollary from Theorem 1.17 (estimate of the effective critical set) from \cite{CNV}. We warn the reader that we formulate it below in  our own notation and don't bring the proof of Theorem 1.17.
  \subsubsection*{Reformulation of Theorem 1.17 from \cite{CNV}.}
 Let $u$ be as in Lemma \ref{l:base2}.
 For any $\delta>0$ there exist positive constants  $C$ and $c$ depending on $n,\Lambda_1,\Lambda_2, \delta,N_0$ such that the following holds for all integer $K$. 
Partition the unit cube $Q_0$ into $K^n$ sub-cubes $q$ with side length $1/K$. We call q bad if
$$ \inf_q |\nabla u | < c \sup_{2q}|\nabla u|.$$
Then the number of bad cubes $q$ 
is not greater than $C K^{n-2+\delta} $.

 Now, we are ready to finish the proof of Lemma \ref{l:base2}.
We divide the unit cube $Q_0$ into $K^n$ sub-cubes $q$ with side length $1/K$, the integer  $K$ will be chosen later.

The monotonicity of the doubling index implies
$$ \sup_q |\nabla u| \geq c_1 K^{- C_1N_0- C_1} \sup_{Q_0} |\nabla u| =c_1 K^{- C_1N_0- C_1}. $$
If $q$ is not bad, then 
$$ \inf_q |\nabla u| \geq c_2 K^{- C_1N_0- C_1}.$$

Given $a>0$ we  want to estimate the Hausdorff content $C_\h^{n-2+2\delta}$ of 
$$E_a=\{x\in Q_0: |\nabla u(x)|<e^{-a}\}.$$
We may assume $a>1$.
 Now, we specify the choice of $K$. The $K$ is smallest integer number greater than $2$ such that  $$e^{-a}>c_2 K^{- C_1N_0- C_1}.$$
 So $\log K  $ is comparable to $a$. And the set $E_a$ is contained in the union of bad cubes of size $1/K$. The  number of bad cubes is not greater than 
$C K^{n-2+\delta} $. 
 We therefore have  
 \[C_\h^{n-2+2\delta}(E_a)\le C_2 K^{\delta} \le C_3 e^{- c_3 a}.\]
 Replacing $2\delta$ by $\delta$ we finish the proof.

\subsection*{Acknowledgment}
This work was partly done when the second author held a one year visiting position at the Department of Mathematics at Purdue University and it is a pleasure to thank the department for its hospitality.
  

\end{document}